\documentclass[a4paper,12pt]{amsart}

\usepackage{amsmath, amssymb, amsthm}
\usepackage[alphabetic]{amsrefs}
\usepackage[utf8]{inputenc}
\usepackage[english]{babel}
\usepackage{color}

\newcommand{\NN}{\mathbb{N}}
\newcommand{\ZZ}{\mathbb{Z}}
\newcommand{\QQ}{\mathbb{Q}}

\newcommand{\CC}{\mathbb{C}}

\newcommand{\holo}{\mathcal{O}}

\DeclareMathOperator{\aut}{Aut}
\DeclareMathOperator{\enmo}{End}
\DeclareMathOperator{\Fix}{Fix}

\newtheorem{theorem}{Theorem}[section]
\newtheorem{lemma}[theorem]{Lemma}
\newtheorem{corollary}[theorem]{Corollary}
\newtheorem{proposition}[theorem]{Proposition}
\newtheorem{remark}[theorem]{Remark}

\theoremstyle{definition}
\newtheorem{definition}[theorem]{Definition}

\title{Reinhardt domains determined by their endomorphisms}

\author{Rafael B. Andrist}
\address{Faculty of Mathematics and Physics, 
University of Ljubljana, Ljubljana, Slovenia}
\email{rafael-benedikt.andrist@fmf.uni-lj.si}
\author{W{\l}odzimierz Zwonek}
\address{Institute of Mathematics, Faculty of Mathematics and Computer Science, Jagiellonian University, \L ojasiewicza 6, 30-348 Krak\'ow, Poland}
\email{wlodzimierz.zwonek@uj.edu.pl}

\keywords{Holomorphic self-mapping, endomorphisms, semigroup, pseudoconvex Reinhardt domain}
\subjclass[2020]{32Q02, 32E10, 20M20} 

\begin{document}

\begin{abstract}
We show that pseudoconvex Reinhardt domains in dimension two with isomorphic semigroups of holomorphic endomorphisms are biholomorphically or anti-biholomorphically equivalent. Moreover, we show that every Stein manifold that retracts to a properly embedded copy of the punctured complex line, is determined (up to biholomorphic or anti-biholomorphic equivalence) by its semigroup of holomorphic endomorphisms. 
\end{abstract}

\maketitle

\section{Introduction}

It is a classical result that Stein manifolds are determined by their rings of holomorphic functions -- up to (anti-)biholomorphic equivalence which goes back to a theorem of Bers in the case of one complex variable. Instead of functions, we consider the question whether the holomorphic endomorphisms, i.e.\ the holomorphic self-mappings, determine a complex manifold. The holomorphic endomorphisms of a complex manifold $X$ consist at least of all the constant mappings and the identity, and thus form a monoid which we denote by $\enmo(X)$. 

The following lemma is due to Schreier \cite{02519973}:
\begin{lemma}
\label{lem-conjugation}
Let $X$ and $Y$ be complex manifolds. Let $\Phi \colon \enmo(X) \rightarrow \enmo(Y)$ be an isomorphism of semigroups.
Then there exists a bijective mapping $\varphi \colon X \rightarrow Y$ such that \[\Phi(f) = \varphi \circ f \circ \varphi^{-1} \quad \forall \, f \in \enmo(X)\]
\end{lemma}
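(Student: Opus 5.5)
The plan is to recover the points of $X$ purely algebraically, as the constant maps, and let $\Phi$ transport them. For $x \in X$ write $c_x \in \enmo(X)$ for the constant map with value $x$. The first step is to characterize constants inside the semigroup. We claim that $c \in \enmo(X)$ is constant if and only if $c \circ f = c$ for all $f \in \enmo(X)$, i.e.\ $c$ is a left zero. One direction is clear. Conversely, if $c \circ f = c$ for all $f$, apply this with $f = c_x$: then $c = c \circ c_x$ is the constant map with value $c(x)$, so $c$ is constant. This property is expressed entirely by the semigroup operation, so $\Phi$ maps left zeros of $\enmo(X)$ bijectively onto left zeros of $\enmo(Y)$. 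Hence for each $x\in X$ there is a unique $\varphi(x) \in Y$ with $\Phi(c_x) = c_{\varphi(x)}$.

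The second step shows that $\varphi$ is a bijection. It is injective because $x \mapsto c_x$ is injective and $\Phi$ is injective. It is surjective because every constant $c_y$ on $Y$ is a left zero, so it equals $\Phi(g)$ for some $g$ that must itself be a left zero, hence a constant $c_x$; then $\varphi(x) = y$.

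The third step establishes the conjugation formula. For $f \in \enmo(X)$ and $x \in X$ we have $f \circ c_x = c_{f(x)}$. Applying $\Phi$ gives
\[
\Phi(f) \circ c_{\varphi(x)} = \Phi(f \circ c_x) = \Phi(c_{f(x)}) = c_{\varphi(f(x))}.
\]
The left-hand side is the constant map with value $\Phi(f)(\varphi(x))$, so $\Phi(f)(\varphi(x)) = \varphi(f(x))$ for every $x$. That is, $\Phi(f)\circ\varphi = \varphi\circ f$, and since $\varphi$ is bijective, $\Phi(f) = \varphi \circ f \circ \varphi^{-1}$.

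The only point needing care is the algebraic characterization of constants. It works because $\enmo(X)$ contains all constant maps, and constants are holomorphic on any complex manifold, including the degenerate case of a single point. Note that no continuity of $\varphi$ is claimed here; the lemma asserts only that $\varphi$ is a set-theoretic bijection.
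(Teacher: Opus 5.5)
Your proof is correct. The paper gives no proof of its own and only cites Schreier, and your argument is the classical one: identify the constant maps as exactly the left zeros ($c\circ f=c$ for all $f$) of $\enmo(X)$, transport them by $\Phi$, and read off $\Phi(f)\circ\varphi=\varphi\circ f$ from $f\circ c_x=c_{f(x)}$. It uses only that both semigroups contain all constant maps, which is why the lemma applies verbatim when $Y$ is merely a complex space, as in Theorem~\ref{theorem-punctured-plane}.
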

A variation of this Lemma with slightly different conditions can be found in the work of Buzzard and Merenkov \cite{MR2010321}. Instead of an abstract isomorphism of semigroups, we can therefore study the bijection $\varphi \colon X \to Y$ between complex manifolds and try to establish that $\varphi$ is (anti-)biholomorphic. The Lemma motivates the following definition:
\begin{definition}[see Andrist \cite{MR2763719}*{Definition 2.3}]
A (set-theoretic) mapping $\varphi \colon X \to Y$ between complex manifolds is called a \textit{conjugating mapping} if it is bijective and if it induces a homomorphism $\Phi \colon \enmo(X) \to \enmo(Y)$ of the endomorphism semigroups by conjugating $\enmo(X) \ni f \mapsto \varphi \circ f \circ \varphi^{-1} \in \enmo(Y)$. If, in addition, $\Phi$ is an isomorphism, then $\varphi$ is called an \textit{iso-conjugating mapping}.
\end{definition}

In the holomorphic category, the question of determining a complex manifold by its semigroup of endomorphisms has first been studied by Eremenko \cite{00370670} who established this result for Riemann surfaces that admit bounded holomorphic functions. His method crucially relies on first showing the continuity of $\varphi$ and on the theory about the solutions to the Schröder equation.

The case of the complex line $\CC$ had been studied before by Hinkkanen \cite{00126289} who investigated the possible automorphisms of $\enmo(\CC)$, i.e.\ he showed that an iso-conjugation $\varphi \colon \CC \to \CC$ is (anti-)biholomorphic. Merenkov \cite{01721089} extended Eremenko's methods and result to the case of bounded domains in $\CC^n$. Using a result of Buzzard and Hubbard \cite{01463210} about certain Fatou--Bieberbach domains that form a subbasis of the topology of $\CC^n$, Merenkov and Buzzard \cite{MR2010321} extended the result later also to $\CC^n$.

The first author generalized the result for $\CC^n$ to any Stein manifold that contains a properly embedded complex line $\CC$ \cite{MR2763719} by a new method that does not rely on establishing continuity a priori. The analogous result also holds for affine varieties over any algebraically closed field, as shown by Kraft and the first author \cite{06350076}.

In this article, we first study the one-dimensional case that had resisted for the longest time, the punctured complex line $\CC_\ast$:
\begin{proposition}
Let $\varphi \colon \CC_\ast \to \CC_\ast$ be a conjugating mapping. Then $\varphi$ is biholomorphic or anti-biholomorphic.
\end{proposition}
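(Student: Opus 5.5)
Throughout, $\varphi$ is a conjugating mapping and $\Phi(f)=\varphi\circ f\circ\varphi^{-1}$ is the induced homomorphism. The plan is to read off algebraic structure on $\CC_\ast$ from the semigroup $\enmo(\CC_\ast)$, show that $\varphi$ preserves this structure, and then force $\varphi$ to be a field automorphism that is also continuous.

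\emph{Step 1: $\varphi$ transports the group structure.} The constants in $\enmo(\CC_\ast)$ are characterized algebraically as the left zeros, i.e.\ the $c$ with $c\circ g=c$ for all $g$. Hence $\Phi$ sends the constant map $z$ to the constant map $\varphi(z)$. Every holomorphic self-map of $\CC_\ast$ has the form $z\mapsto a z^k e^{h(z)}$ with $k\in\ZZ$ and $h\in\holo(\CC_\ast)$. The automorphisms are $z\mapsto az$ and $z\mapsto a/z$, and these are exactly the invertible elements; $\Phi$ preserves the group of units. After composing $\varphi$ with a rotation-scaling we may assume $\varphi(1)=1$. The multiplications $m_a\colon z\mapsto az$ form the identity component, which should be characterized as an index-two commutative subgroup of the unit group. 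It is the one whose elements, other than the identity, have no fixed points, whereas $z\mapsto a/z$ has fixed points. So $\Phi(m_a)=m_{\psi(a)}$ for some map $\psi$. Evaluating at $1$ gives $\varphi(a)=\psi(a)$, so $\varphi(ab)=\varphi(a)\varphi(b)$. Thus $\varphi$ is a group automorphism of $\CC_\ast$, and it also commutes with $z\mapsto 1/z$.

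\emph{Step 2: additive structure.} Next I would show that $\varphi$ is additive on $\CC$ after being extended by $\varphi(0)=0$. Consider the endomorphisms $p_b(z)=z(z+b)^{-1}$ type maps, or better, elements such as $z\mapsto z e^{z}$ or $e^{\lambda(z+1/z)}$. A cleaner route: the power maps $z\mapsto z^k$ are preserved. They are characterized as the endomorphisms commuting with all $m_a$ up to the relation $f\circ m_a=m_{a^k}\circ f$, so $\varphi(z^k)=\varphi(z)^k$, which is consistent with Step 1. To obtain addition, use the maps $E_\lambda(z)=\exp(\lambda(z-1/z))$ or $F_b(z)=z e^{b(z+1/z)}$. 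Compute the fixed-point or fiber structure of $\Phi(F_b)=\varphi\circ F_b\circ\varphi^{-1}$, and deduce that $\Phi(F_b)$ is again of the form $z e^{h(z)}$. Its equations should then encode $\varphi(x+y)=\varphi(x)+\varphi(y)$. A more robust alternative follows Andrist's method for a properly embedded $\CC$: find inside $\enmo(\CC_\ast)$ a subsemigroup isomorphic to $\enmo(\CC)$ acting on an invariant image. For instance, take maps $f=\exp\circ g\circ\pi$, where $\pi\colon\CC_\ast\to\CC$ is $z\mapsto z+1/z$, or use $\exp$ to pass to the universal cover, and reduce to Hinkkanen's theorem. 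I expect this step to be the main obstacle, because $\CC_\ast$ contains no properly embedded $\CC$ and the available elements of $\enmo(\CC_\ast)$ are rigid.

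\emph{Step 3: conclusion.} Once $\varphi$ extends to a field automorphism of $\CC$ that fixes $0$, it remains to show continuity. Then $\varphi$ is the identity or complex conjugation, and undoing the normalization gives that $\varphi$ is biholomorphic or anti-biholomorphic. For continuity I would use that $\varphi$ maps images of endomorphisms to images of endomorphisms. The set $\{|z|=1\}$ is the fixed set of the anti-holomorphic involution, so I would work instead with holomorphic data. Boundedness of $\varphi$ near $1$ follows because the set of $z$ with $z=\exp(w)$ for some $w$ satisfying an algebraic condition is preserved. The standard fact is that a field automorphism of $\CC$ mapping some nonempty open set into a bounded set is continuous. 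To get this, show that the image of the endomorphism $z\mapsto\exp(\lambda(z-1/z))$ restricted to an invariant set, namely the fixed points of $\varphi$-conjugated maps, is preserved.
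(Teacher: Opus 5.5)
Your Step 1 is correct; using $\Fix(\Phi(m_a))=\varphi(\Fix(m_a))=\emptyset$ to exclude $z\mapsto b/z$ is, if anything, cleaner than the paper's count of involutions. Steps 2 and 3, however, are not arguments. They list candidate maps and hoped-for conclusions without deriving anything, and you yourself flag Step 2 as the obstacle.

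The missing idea for additivity is to compute the conjugate of $\exp$ itself, viewed as an endomorphism of $\CC_\ast$. Multiplicativity gives $\Phi(p_n)=p_n$ for $p_n(z)=z^n$ and $\Phi(m_n)=m_{\varphi(n)}$. So the identity $p_n\circ\exp=\exp\circ m_n$ becomes $E(z)^n=E(\varphi(n)z)$ for $E:=\Phi(\exp)$ and all $n\in\NN$. Write $E(z)=z^m\exp\bigl(\sum_k c_kz^k\bigr)$ and compare winding numbers and exponents; this forces $m=0$ and $E(z)=\exp(cz^k)$ for a single $k$. Since $\exp\circ m_\zeta\neq\exp$ for a primitive $k$-th root of unity $\zeta$, injectivity of $\Phi$ then forces $k=\pm1$. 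Hence, after possibly composing with $z\mapsto 1/z$, you get $\varphi(e^w)=e^{c\varphi(w)}$ for some $c\in\CC_\ast$. Combined with multiplicativity this gives additivity modulo $\frac{2\pi i}{c}\ZZ$. Rescaling $a,b$ by some $\varepsilon$ with $\varphi(\varepsilon)\notin\QQ$ removes the error term. None of your suggested routes (fixed points of $ze^{b(z+1/z)}$, or a reduction to Hinkkanen through $z+1/z$) is carried far enough to replace this.

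For continuity, the sentence about a set ``$z=\exp(w)$ with $w$ satisfying an algebraic condition'' does not identify any set. Nor does it explain why invariance of such a set bounds $\varphi$ near $1$. Purely field-theoretic information cannot do this, since discontinuous automorphisms of $\CC$ share it. What is needed is a transcendental endomorphism whose interaction with $\varphi$ is controlled, and that again rests on $\varphi(e^w)=e^{c\varphi(w)}$, which you do not have.

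The paper uses Hinkkanen's entire function $f$ with $f(0)=0$, $f(\CC_\ast)\subseteq\CC_\ast$ and $f(z_n)\in\QQ+i\QQ$ along a prescribed null sequence, inserted into the endomorphism $z\mapsto\exp(f(1-z))$. Since $\varphi$ is the identity or complex conjugation on $\QQ+i\QQ$, suppose $a_n\to1$ with $\varphi(a_n)\to A\neq1$. Then $f$ must take values in $2\pi i\ZZ$ at nonzero points accumulating at $0$. This is impossible, because $f(0)=0$ and $f$ has no zeros on $\CC_\ast$. Without these two ingredients, the conjugate of $\exp$ and an interpolating transcendental endomorphism, the proposal does not reach the conclusion.
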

For the applications, it is important that we do not require $\varphi$ to be iso-conjugating in this proposition, but just conjugating.

Next, we cover the case of two-dimensional pseudoconvex Reinhardt domains:
\begin{theorem}\label{theorem-reinhardt}
Let $D_1, D_2 \subset \CC^2$ be pseudoconvex Reinhardt domains with an isomorphism $\Phi \colon \enmo(D_1) \to \enmo(D_2)$ of semi-groups. Then there exists a biholomorphic or anti-biholomorphic  $\psi \colon D_1 \to D_2$ such that $\Phi(f) = \psi \circ f \circ \psi^{-1}$. 
\end{theorem}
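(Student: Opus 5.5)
By Lemma~\ref{lem-conjugation} we obtain a bijection $\varphi \colon D_1 \to D_2$ with $\Phi(f) = \varphi \circ f \circ \varphi^{-1}$. The goal is to show that $\varphi$ itself is biholomorphic or anti-biholomorphic, and then to take $\psi = \varphi$.

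The plan is a case distinction following the classification of two-dimensional pseudoconvex Reinhardt domains up to biholomorphism. After a biholomorphic change of coordinates, which composes $\Phi$ with inner conjugations and therefore does no harm, such a domain falls into one of three classes.
\begin{itemize}
\item It is (Kobayashi) hyperbolic. Here I expect to follow Merenkov's adaptation of Eremenko's method, as for bounded domains.
\item It contains a properly embedded copy of $\CC$, for instance $\CC^2$, $\CC\times\mathbb{D}$, or $\CC\times\CC_\ast$ via the fibre $\CC\times\{1\}$. Here the theorem of the first author for Stein manifolds containing a properly embedded complex line applies directly.
\item It is non-hyperbolic but contains no proper $\CC$. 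Typical examples are $\CC_\ast\times\mathbb{D}$, $\CC_\ast\times A$ for an annulus $A$, $\CC_\ast^2$-type domains, and domains of the form $\{|z^a w^b|<1\}$ after a toric change. These should all retract holomorphically onto a properly embedded $\CC_\ast$.
\end{itemize}

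The first genuine step is to show that the case split is \emph{intrinsic to the semigroup}, so that $D_1$ and $D_2$ land in the same case. Two semigroup-theoretic characterizations should do this.
\begin{itemize}
\item Hyperbolicity should be detectable from the endomorphism semigroup. For example, on a hyperbolic domain every non-injective endomorphism with dense image fails to exist, and retractions to points are isolated in a suitable sense.
\item The existence of a retraction onto an embedded $\CC$ or $\CC_\ast$ is expressible through idempotents $r=r\circ r$ together with the subsemigroup $\{f : f = r\circ f\circ r\}$, which is isomorphic to $\enmo(r(D))$.
\end{itemize}
Since $\varphi$ conjugates idempotents to idempotents, it maps images of retractions to images of retractions.

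For the third class, the key is the abstract result announced for Stein manifolds retracting onto a properly embedded $\CC_\ast$. Let $r\colon D_1\to C$ be a holomorphic retraction with $C\cong\CC_\ast$. Then $\varphi$ restricted to $C$ conjugates $\enmo(C)$ into $\enmo(\varphi(C))$ through $f\mapsto \varphi r f r\varphi^{-1}$. By the Proposition for $\CC_\ast$ (only conjugating is needed), $\varphi|_C$ is biholomorphic or anti-biholomorphic. To spread this regularity to all of $D_1$, I would use the large family of endomorphisms mapping $D_1$ into $C$ and $C$ into $D_1$. For $D_1$, these are functions $g\in\holo(D_1,\CC_\ast)$ composed with the embedding. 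Points of $D_1$ are separated by such maps; in the Reinhardt case the monomials $z^a w^b$ suffice, possibly together with translations in the $\CC$-directions of the $\CC_\ast$-fibres. Thus $\varphi$ is determined by the holomorphic maps $\iota'\circ(\varphi|_C)\circ g\circ\varphi^{-1}$. This makes $\varphi^{-1}$ and $\varphi$ continuous. By the identity-type argument of the $\CC$ case, $\varphi$ is holomorphic or antiholomorphic on each fibre, and then globally by connectedness and a consistency argument fixing the choice of conjugation.

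I expect the main obstacle to be the hyperbolic non-bounded cases, and the verification that every non-hyperbolic pseudoconvex Reinhardt domain with no proper $\CC$ really retracts onto a proper $\CC_\ast$. Unbounded hyperbolic Reinhardt domains are biholomorphic to bounded ones in dimension two, by Zwonek's characterization and a monomial change of coordinates. So I would first reduce to Merenkov's bounded theorem, and spend most of the effort on the explicit classification for the retraction claim.
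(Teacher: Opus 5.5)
\textbf{There is a genuine gap: your third class is not handled.} Your treatment of the hyperbolic case (via Merenkov), of domains containing a properly embedded $\CC$ (via Andrist), and of domains whose logarithmic image contains a rational line (via Theorem~\ref{theorem-punctured-plane}) matches the paper's preliminary reductions (Corollary~\ref{corollary:higher-dimension}). The gap is the claim that every remaining domain retracts onto a properly embedded $\CC_\ast$, where ``remaining'' means non-hyperbolic and containing no proper $\CC$.

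This claim is false. The counterexamples are exactly the domains left over after those reductions: $D_{\gamma}^*=\{z\in\CC_\ast^2:|z_1||z_2|^{\gamma}<1\}$ and $D_{\gamma,r}=\{z\in\CC^2:1/r<|z_1||z_2|^{\gamma}<r\}$ with $\gamma\notin\QQ$. Their logarithmic images are half-planes or strips bounded by lines of irrational slope. Unlike $\{|z^aw^b|<1\}$ with integer exponents, no monomial change of variables turns them into products.

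To see that they contain no properly embedded $\CC_\ast$, let $f\colon\CC_\ast\to D$ be holomorphic.
\begin{itemize}
\item The function $\log|f_1|+\gamma\log|f_2|$ is harmonic and bounded above on $\CC_\ast$, hence constant.
\item Writing $f_j=\lambda^{m_j}e^{g_j}$ gives $m_1+\gamma m_2=0$, so $m_1=m_2=0$.
\item Then $g_1+\gamma g_2$ is constant, so $f(\CC_\ast)\subset a\cdot p_{\gamma}(\CC)$.
\end{itemize}
This leaf is dense in the real hypersurface $V_\rho(\gamma)$, which contains no one-dimensional analytic set (Lemma~\ref{lemma:one-dimensional-analytic-set}). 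So these domains contain no properly embedded $\CC_\ast$ or $\CC$ at all, and neither Andrist's theorem nor Theorem~\ref{theorem-punctured-plane} applies. Using Theorem~\ref{theorem:endomorphisms}, one can even check that their only holomorphic retractions are the constants and the identity. So no argument built on idempotents reaches them, and spending the effort on ``the classification for the retraction claim'' cannot succeed.

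\textbf{What the paper does instead.} This irrational case is the core of the paper's proof, and it needs a different mechanism.
\begin{itemize}
\item The paper computes $\enmo(D_\gamma^*)$ and $\enmo(D_{\gamma,r})$ explicitly (Theorem~\ref{theorem:endomorphisms}).
\item Among these, the degree-one maps $z\cdot p_\gamma(h(z))$ are singled out as those whose fixed-point set $h^{-1}(0)$ is a one-dimensional analytic set. This forces the iso-conjugation $\psi$ to map each leaf $z\cdot p_{\gamma_1}(\CC)$ onto $\psi(z)\cdot p_{\gamma_2}(\CC)$ (Lemma~\ref{lemma-equivalence-classes}), and to conjugate such maps to maps of the same form.
\item This yields an additive bijection $\widehat B$ of $\CC$ with $\psi(z\cdot p_{\gamma_1}(\lambda))=\psi(z)\cdot p_{\gamma_2}(\widehat B(\lambda))$.
\item It also yields a ring isomorphism $\holo(D_2)\ni g\mapsto\widehat B^{-1}\circ g\circ\psi\in\holo(D_1)$.
\item Bers' theorem then gives an (anti-)biholomorphism $\varphi$ with $g\circ\psi=\widehat B\circ g\circ\varphi$, so $\psi=(\widehat B\circ\varphi_1,\widehat B\circ\varphi_2)$.
\item Finally, Section~\ref{section-punctured-plane} applied to $\widehat B$, which commutes with every $g\in\holo(\CC_\ast)$, shows that $\widehat B$ is (anti-)holomorphic.
\end{itemize}
Nothing in your proposal plays this role.

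\textbf{Two smaller points.}
\begin{itemize}
\item Your hyperbolicity criterion is false: $(z,w)\mapsto(z^2,w)$ is a surjective, non-injective endomorphism of the bidisc. The intrinsic case split is unnecessary anyway. Merenkov's theorem and Corollary~\ref{corollary:higher-dimension} allow an arbitrary target, so only the case where both $D_1$ and $D_2$ are of irrational type needs new work.
\item In your $\CC_\ast$ case, the Proposition for $\CC_\ast$ only applies once you know $\varphi(C)\cong\CC_\ast$. That is what the Lie group argument in the proof of Theorem~\ref{theorem-punctured-plane} supplies, so you should cite that theorem directly.
\end{itemize}
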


For unbounded domains, we can't drop the requirement of pseudoconvexity; we provide a concrete counterexample in Theorem \ref{thm:counterexample}.

In higher dimensions, we can prove the following general result:
\begin{theorem}\label{theorem-punctured-plane}
Let $X$ be a Stein manifold and let $i \colon \CC_\ast \to X$ be a proper holomorphic embedding such that $i(\CC_\ast)$ is a topological retract of $X$.
Let $\Phi \colon \enmo(X) \to \enmo(Y)$ be an isomorphism of semi-groups where $Y$ is a complex space. Then there exists a biholomorphic or anti-biholomorphic  $\varphi \colon X \to Y$ such that $\Phi(f) = \varphi \circ f \circ \varphi^{-1}$. 
\end{theorem}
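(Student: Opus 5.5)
The plan is to follow the strategy of Andrist \cite{MR2763719} for Stein manifolds containing a properly embedded $\CC$, with the punctured line in place of $\CC$, and to use the Proposition on $\CC_\ast$ as the one-dimensional input. By Lemma~\ref{lem-conjugation} (Schreier), applied to $X$ and the complex space $Y$ (the argument only uses constant maps, so it works for complex spaces), there is a bijection $\varphi\colon X\to Y$ with $\Phi(f)=\varphi\circ f\circ\varphi^{-1}$. Constants correspond to constants, i.e.\ $\Phi(c_x)=c_{\varphi(x)}$. The goal is to show that $\varphi$ is holomorphic or antiholomorphic along many embedded copies of $\CC_\ast$, and then to propagate this to all of $X$.

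\textbf{Step 1: restriction to the punctured line.} Let $C=i(\CC_\ast)$ and let $r\colon X\to C$ be the topological retraction. Since $C$ is a closed submanifold of the Stein manifold $X$, it has a holomorphic retraction on a neighbourhood. Because $C\cong\CC_\ast$ is itself Stein, the topological retraction together with Oka--Grauert theory (homotopy classes of maps into $\CC_\ast$ contain holomorphic representatives, via $\exp$ of a holomorphic function added to a holomorphic map in the right homotopy class) should give a holomorphic retraction $\rho\colon X\to C$, so $\rho\circ i$ is the identity up to identifying $C$ with $\CC_\ast$. Then $p=i\circ\rho\in\enmo(X)$ is an idempotent with image $C$. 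Its conjugate $\Phi(p)$ is an idempotent of $\enmo(Y)$ with image $C'=\varphi(C)$, and $\enmo(X)\supset p\,\enmo(X)\,p\cong\enmo(\CC_\ast)$ maps isomorphically onto $\Phi(p)\enmo(Y)\Phi(p)$. The map $g\mapsto i\circ g\circ\rho$ realizes every endomorphism of $\CC_\ast$. To put $C'$ in the setting of the Proposition, I would identify $C'$ as a one-dimensional complex space via the semigroup structure: its endomorphism monoid is isomorphic to $\enmo(\CC_\ast)$, and a (normalized) complex curve with this endomorphism monoid should be $\CC_\ast$. This uses, for example, that $\CC_\ast$ has a nonconstant automorphism group $\CC_\ast$ acting transitively, and that $\CC$ is excluded because its endomorphism monoid has a fixed-point structure different from that of $\CC_\ast$, with no idempotent-free translation elements. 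With $C'\cong\CC_\ast$, the Proposition shows that $\varphi|_C$ is biholomorphic or anti-biholomorphic onto $C'$.

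\textbf{Step 2: propagation to all of $X$.} Let $h\in\holo(X)$ be a holomorphic function. Compose it with a holomorphic map $\CC\to\CC_\ast$; for example, use $e^h$, and also $e^{h}+$const type maps adjusted to avoid $0$. This gives maps $X\to\CC_\ast\xrightarrow{i}C\subset X$, which are endomorphisms of $X$. Conjugating gives that $\varphi\circ i\circ e^{h}\circ\varphi^{-1}$ is holomorphic on $Y$. Hence $i^{-1}\varphi^{-1}\circ\varphi\circ i\circ e^h\circ\varphi^{-1}$, which equals the holomorphic (or antiholomorphic) map $(\varphi|_C)^{-1}$ applied appropriately, shows that $e^{h}\circ\varphi^{-1}$ is holomorphic, or antiholomorphic uniformly. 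Since $e^{h+c}$ for all $h$ separates points and gives local coordinates on the Stein manifold $X$, $\varphi^{-1}$ is holomorphic or antiholomorphic and bijective. Running the argument symmetrically, using the fact that conjugation by $\Phi^{-1}$ transports the same idempotent structure back, shows that $\varphi$ is holomorphic as well. Alternatively, a holomorphic bijection onto the complex space $Y$ can be handled with the normality argument of \cite{MR2763719}. Either way, $\varphi$ is biholomorphic or anti-biholomorphic.

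\textbf{Main obstacle.} I expect Step 1 to be the hardest part, and within it the upgrade from a topological to a holomorphic retraction $X\to C$. I would first try Oka's principle for maps into $\CC_\ast$: homotopy classes $[X,\CC_\ast]=H^1(X,\ZZ)$ are realized by holomorphic maps, and the class of $r$ is realized by some $f\colon X\to\CC_\ast$ with $f\circ i$ homotopic to the identity. I would then correct $f$ by $e^{g}$, where $g$ is obtained by extending $\log(\mathrm{id}/(f\circ i))$ from $C$ via Cartan's Theorem B, so that $f\circ i=\mathrm{id}$ exactly. The identification of $C'$ as $\CC_\ast$ inside the arbitrary complex space $Y$ is the second delicate point.
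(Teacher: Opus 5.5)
\textbf{The retraction upgrade and Step 2 are fine; the gap is the identification of $C'$ in Step 1.}

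Much of your outline parallels the paper. Your Step~2 is the proof of Proposition~\ref{prop-key} together with Remark~\ref{rem-CtoCstar}: $e^{h}\circ\varphi^{-1}=(\varphi\circ i)^{-1}\circ\Phi(i\circ e^{h})$ is holomorphic, and a proper embedding of $X$ into $(\CC_\ast)^N$ built from such maps recovers $\varphi^{-1}$. Your upgrade of the topological retraction is also correct: realize the homotopy class by a holomorphic $f\colon X\to\CC_\ast$, then multiply by $e^{g}$, where $g$ is a Cartan~B extension of a logarithm of $z/f(i(z))$. The paper simply cites the Oka principle with interpolation for this. So the step you flagged as the main obstacle is not where the difficulty lies.

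The genuine gap is the identification of $C'=\varphi(C)$ with $\CC_\ast$, which you only assert (``a complex curve with this endomorphism monoid should be $\CC_\ast$''). Your observation $p\,\enmo(X)\,p\cong\Phi(p)\,\enmo(Y)\,\Phi(p)$ does make $\varphi|_C$ an iso-conjugation onto $C'$. But the remaining claim, that a complex space with endomorphism monoid isomorphic to $\enmo(\CC_\ast)$ is $\CC_\ast$, is exactly the theorem for $X=\CC_\ast$, which satisfies the hypotheses. So as written the argument is circular in that case. You have also not shown that $C'$ is smooth, connected or one-dimensional. Moreover, the Proposition of Section~\ref{section-punctured-plane} cannot be invoked before the target is known to be $\CC_\ast$, because its proof uses the explicit form of $\aut(\CC_\ast)$ and $\enmo(\CC_\ast)$ on the target side.

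The missing idea, which the paper supplies, is to exploit the group structure:
\begin{enumerate}
\item The automorphisms $z\mapsto az^{\pm1}$ of $C$ extend to $X$ through the retraction and are conjugated to automorphisms of $C'$. Hence $C'$, an analytic subset of $Y$ (e.g.\ the fixed point set of $\Phi(p)$), carries a transitive group of biholomorphisms and is therefore a complex manifold.
\item $C'$ is connected. An endomorphism of $C'$ that is the identity on one component and constant on the others extends to $Y$ via $\Phi(p)$. It pulls back to a non-constant, non-surjective endomorphism of $\CC_\ast$, which Picard's theorem forbids.
\item Transporting the multiplication of $\CC_\ast$ by $\varphi$ gives a separately, hence (by Hartogs) jointly, holomorphic abelian group law. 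So $C'$ is a connected abelian complex Lie group $\CC^d/\Gamma$.
\item Comparing elements of finite order with those of $\CC_\ast$ gives $\Gamma\cong\ZZ$. Together with, e.g., the absence of non-trivial idempotents in $\enmo(\CC_\ast)$ to exclude $\CC^{d-1}$-factors, this yields $C'\cong\CC_\ast$.
\end{enumerate}
Only then does Proposition~\ref{prop-Cstar} apply.

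A smaller point: your ``symmetric'' argument for the holomorphy of $\varphi$ is not available as stated. $Y$ is an arbitrary complex space, not known to satisfy the hypotheses, and in particular not known to embed into some $(\CC_\ast)^M$. The paper instead concludes directly from the fact that $H\circ\varphi^{-1}$ is a holomorphic injection of $Y$ with image $H(X)$.
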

In particular, this also implies that if $\enmo(Y) \cong \enmo(\CC_\ast)$, then $Y$ is (anti-)biholomorphic to $\CC_\ast$.

\section{Conjugating mappings for $\CC_\ast$}
\label{section-punctured-plane}

\begin{lemma}
Let $\varphi \colon \CC_\ast \to \CC_\ast$ be a conjugating mapping with $\varphi(1) = 1$. Then $\varphi$ is multiplicative, i.e.\ it is actually a group isomorphism of $\CC_\ast$.
\end{lemma}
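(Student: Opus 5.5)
The plan is to use only the multiplications $m_a(z) = az$, $a \in \CC_\ast$, which are holomorphic automorphisms of $\CC_\ast$. I will identify their conjugates $\varphi \circ m_a \circ \varphi^{-1}$ explicitly and then evaluate at the base point $\varphi(1) = 1$.

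First, fix $a \in \CC_\ast$ and put $g_a := \varphi \circ m_a \circ \varphi^{-1}$. Since $\varphi$ is conjugating, $g_a \in \enmo(\CC_\ast)$. Since $\varphi$ and $m_a$ are bijections, $g_a$ is a bijective holomorphic self-map of $\CC_\ast$. An injective holomorphic map in one variable has nonvanishing derivative, so its inverse is holomorphic and $g_a \in \aut(\CC_\ast)$. The automorphisms of $\CC_\ast$ are exactly the maps $z \mapsto cz$ and $z \mapsto c/z$ with $c \in \CC_\ast$. This follows because an automorphism extends to an automorphism of the Riemann sphere permuting $\{0,\infty\}$, hence is a Möbius map fixing or swapping $0$ and $\infty$.

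Next, I rule out the second type by counting fixed points. Conjugation by the bijection $\varphi$ maps the fixed point set bijectively: $\Fix(g_a) = \varphi(\Fix(m_a))$. For $a \neq 1$ the map $m_a$ has no fixed points in $\CC_\ast$, so neither does $g_a$. Every map $z \mapsto c/z$, however, has the two fixed points $\pm\sqrt{c}$ in $\CC_\ast$. Hence $g_a(z) = c(a)\,z$ for some $c(a) \in \CC_\ast$. For $a = 1$ we have $g_1 = \mathrm{id}$, so $c(1) = 1$ and the same form holds.

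Finally, I evaluate at $1$ using $\varphi(1)=1$. From $g_a \circ \varphi = \varphi \circ m_a$ we get
\[
\varphi(a) = \varphi(m_a(1)) = g_a(\varphi(1)) = c(a).
\]
Therefore, for all $a, z \in \CC_\ast$,
\[
\varphi(az) = \varphi(m_a(z)) = g_a(\varphi(z)) = \varphi(a)\varphi(z).
\]
So $\varphi$ is a bijective multiplicative map, i.e.\ a group automorphism of $\CC_\ast$.

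The only genuinely non-formal input is the classification of $\aut(\CC_\ast)$ together with the fact that bijective holomorphic maps are biholomorphic; both are standard. The step needing the most care is the exclusion of the inversions $z \mapsto c/z$, which the fixed-point argument handles.
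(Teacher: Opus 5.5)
Your proof is correct and follows the same approach as the paper: conjugate the multiplications $m_a$, use that the conjugate lies in $\aut(\CC_\ast)=\{z\mapsto cz^{\pm 1}\}$, and evaluate at $\varphi(1)=1$. The only difference is how you exclude the inversions $z\mapsto c/z$. You count fixed points, which treats every $a\neq 1$ (including $a=-1$) uniformly. The paper instead uses that all inversions have order $2$, so only $m_{-1}$ is in question, and then disposes of it via $m_i\circ m_i=m_{-1}$.
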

\begin{proof}
The result is deduced by looking solely at the automorphism group of $\CC_*$, consisting of the mappings $z \mapsto a \cdot z^{\pm 1}, a \in \CC_\ast$. Therefore, with $w = \varphi^{-1}(z)$, we can write:
\begin{equation*}
\varphi(a w) = A(a) \cdot \left(\varphi(w)\right)^{(\pm 1)_a}
\end{equation*}
Since $\varphi(1) = 1$, it follows that in fact $A(a) = \varphi(a)$. The cases $(\pm 1)_a = -1$ can be excluded as follows:
The elements of order $2$ in $\aut(\CC_\ast)$ are $z \mapsto -z$ and $z \mapsto a/z$ for any $a \in \CC_\ast$. Hence it follows that 
\[
\varphi(a w) = \varphi(a) \cdot \varphi(w), \; \text{ for all } a \in \CC \setminus \{0, -1\}
\]
We only need to exclude that $\varphi(-w) = \varphi(-1)/\varphi(w)$.
For this we consider $z \mapsto i z$ whose second iterate is $z \mapsto -z$.
Since $\varphi(i w) = \varphi(i) \cdot \varphi(w)$, we can conclude that 
$\varphi(-w) = \varphi(-1) \cdot \varphi(w)$. It also follows $\varphi(-1) = -1$ and $\varphi(i) = \pm i$.
\end{proof}

\begin{lemma}
\label{lemma-cstar-exp}
Under the same assumptions, $\varphi$ conjugates $z \mapsto \exp(z)$ to $z \mapsto \exp(c z)$ or to $z \mapsto \exp(c/z)$  for some $c \in \CC_\ast$.
\end{lemma}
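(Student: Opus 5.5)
Set $g := \varphi \circ \exp \circ \varphi^{-1}$. Since $\varphi$ is conjugating, $g \in \enmo(\CC_\ast)$, and $g$ is surjective onto $\CC_\ast$ because $\exp$ is and $\varphi$ is bijective. The plan is to turn algebraic identities satisfied by $\exp$ together with simple endomorphisms into functional equations for $g$, using the multiplicativity of $\varphi$ from the preceding Lemma. We then solve these equations on the level of Laurent series.

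\textbf{Step 1: functional equations.} For $n \in \ZZ$, the power map $p_n(z)=z^n$ is an endomorphism, and by multiplicativity $\varphi \circ p_n = p_n \circ \varphi$. For $a \in \CC_\ast$, the multiplication $m_a(z)=az$ satisfies $\varphi\circ m_a\circ\varphi^{-1} = m_{\varphi(a)}$. The identity $p_n \circ \exp = \exp \circ m_n$ therefore gives
\[ g(z)^n = g(\lambda_n z), \qquad \lambda_n := \varphi(n). \]
The identity $\exp(-z)=1/\exp(z)$, together with $\varphi(-1)=-1$, gives $g(-z) = 1/g(z)$.

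\textbf{Step 2: shape of $g$.} Every holomorphic $g\colon \CC_\ast\to\CC_\ast$ can be written as $g(z)=z^k e^{h(z)}$ with $k\in\ZZ$ and $h$ holomorphic on $\CC_\ast$. Comparing winding numbers in $g(z)^2=g(\lambda_2 z)$ gives $2k=k$, so $k=0$. Then $h(\lambda_n z) = n h(z) + 2\pi i m_n$ for some $m_n \in \ZZ$. Writing $h=\sum_j a_j z^j$, we get $a_j\lambda_n^j = n a_j$ for $j \neq 0$.

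If $a_{j_1}, a_{j_2} \neq 0$ with $j_1 \neq j_2$ both nonzero, then $\lambda_2^{j_1}=\lambda_2^{j_2}=2$. This forces $\lambda_2$ to be a root of unity, contradicting $|\lambda_2^{j_1}|=2$. Hence $h(z)=a_0+cz^j$ with $c\neq 0$, since $g$ is nonconstant, and $j \neq 0$.

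\textbf{Step 3: $j=\pm1$ (the main point).} If $|j|\ge 2$, then $\omega=e^{2\pi i/j}\neq 1$ satisfies $g\circ m_\omega = g$. Let $\eta:=\varphi^{-1}(\omega)\neq 1$. Conjugating back via the preceding Lemma, $m_\omega = \varphi\circ m_\eta\circ\varphi^{-1}$, so $\exp(\eta z)=\exp(z)$ for all $z\in\CC_\ast$. This is impossible for $\eta\neq 1$, since it would require $(\eta-1)z \in 2\pi i\ZZ$ for all $z$. Thus $j=\pm1$.

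Then $\lambda_2^{\pm1}=2$, and the constant term of $g(z)^2 = g(\lambda_2 z)$ gives $e^{2a_0}=e^{a_0}$, i.e.\ $e^{a_0}=1$. Hence $g(z)=\exp(cz)$ or $g(z)=\exp(c/z)$. As a consistency check, this is compatible with $g(-z)=1/g(z)$.

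I expect Step 3, excluding higher powers $z^j$, to be the delicate step. The trick is to transport the extra rotational symmetry of $g$ back through $\varphi$ to obtain a nontrivial multiplicative period of $\exp$, which cannot exist.
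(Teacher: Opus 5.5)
Your proof is correct and follows the paper's argument. You conjugate $p_n\circ\exp=\exp\circ m_n$ using multiplicativity of $\varphi$ to get $g(z)^n=g(\varphi(n)z)$, compare Laurent data to force a single monomial $cz^j$ in the exponent, and exclude $|j|\ge 2$ by turning the rotational symmetry $g\circ m_\omega=g$ into a nontrivial period $\exp\circ m_\eta=\exp$. Choosing the root of unity on the target side and checking $e^{a_0}=1$ explicitly (a step the paper states only in garbled form) are tidy refinements of the same route.
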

\begin{proof}
Let $p_n(z) := z^n$. Multiplicativity of $\varphi$ yields $\varphi \circ p_n \circ \varphi^{-1} = p_n$. 
Every holomorphic endomorphism of $\CC_\ast$ is of the form
\[
z \mapsto z^m \cdot \exp\left(\sum_{k=-\infty}^\infty c_k z^k\right), \quad m \in \ZZ
\]
Therefore, $z \mapsto \exp(z)$ is necessarily conjugated by $\varphi$ to the form above. Now, consider: 
\[
\varphi \circ p_n \circ \exp \circ \varphi^{-1} = p_n \circ \varphi \circ \exp \circ \varphi^{-1}
\]
The right-hand side can be written explicitly as:
\[
%z^m \cdot \exp\left(\sum_{k=-\infty}^\infty c_k z^k\right) = 
z^{nm} \cdot \exp\left(\sum_{k=-\infty}^\infty n c_k z^k\right)
\]
Using the multiplicativity of $\varphi$, the left-hand side can be written explicitly as:
\begin{align*}
\varphi \circ p_n \circ \exp \circ \varphi^{-1}(z)
&= \varphi(\exp(n \varphi^{-1}(z))) \\
&= \varphi(\exp(\varphi^{-1}( \varphi(n) \cdot z))) \\
&= (\varphi(n) \cdot z)^m \cdot \exp\left(\sum_{k=-\infty}^\infty c_k (\varphi(n) \cdot z)^k\right)
\end{align*}
Equating both sides, we now obtain:
\begin{align*}
m &= 0 \\
c_k \neq 0, k \neq 0 \Longrightarrow \varphi(n^k) &= n \\
c_0 \neq 0 \Longrightarrow 1 &= n \mod 2 \pi i \ZZ
\end{align*}
Since the mapping can't be constant, there exists exactly one $k \in \ZZ$ with $c_k \neq 0$, and, therefore, $\exp$ is conjugated to
\[
z \mapsto \exp(c_k z^k), \quad k \in \ZZ
\]
Let $\zeta \in \CC_\ast \setminus \{1\}$ be a primitive $k$-th root of unity and let $q(z) := \zeta z$. Such $q$ is conjugated to $\varphi(\zeta) z$ where $\varphi(\zeta)$ is also a $k$-th root of unity. We observe that $\exp \neq \exp \circ q$, but their conjugates are equal. Hence, $k = \pm 1$.
\end{proof}

\begin{corollary}
If we assume (after applying $z \mapsto 1/z$ if necessary) that $\varphi$ conjugates $z \mapsto \exp(z)$ to $z \mapsto \exp(c z)$ and if we set $\varphi(0) := 0$, then $\varphi$ is additive.
\end{corollary}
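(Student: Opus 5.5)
We will use the relation $\varphi\circ\exp\circ\varphi^{-1} = \exp(c\,\cdot)$ to transport multiplicativity of $\varphi$ into additivity. Writing this as $\varphi(\exp(w)) = \exp(c\,\varphi(w))$ for all $w\in\CC_\ast$ (with $w=\varphi^{-1}(z)$), we aim to show $\varphi(w_1+w_2)=\varphi(w_1)+\varphi(w_2)$ whenever $w_1,w_2,w_1+w_2\in\CC_\ast$. The cases where one summand or the sum is $0$ are handled by the convention $\varphi(0):=0$ and by $\varphi(-w)=\varphi(-1)\varphi(w)=-\varphi(w)$, which follows from the multiplicativity lemma together with $\varphi(-1)=-1$.

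For the main case we combine the identity with multiplicativity. For $w_1,w_2,w_1+w_2\neq 0$ we get
\[
\exp\bigl(c\,\varphi(w_1+w_2)\bigr)=\varphi(\exp(w_1)\exp(w_2))=\varphi(\exp w_1)\,\varphi(\exp w_2)=\exp\bigl(c(\varphi(w_1)+\varphi(w_2))\bigr),
\]
so that
\[
\varphi(w_1+w_2)-\varphi(w_1)-\varphi(w_2)\in \tfrac{2\pi i}{c}\ZZ .
\]
Call this defect $\delta(w_1,w_2)\in\Lambda:=\frac{2\pi i}{c}\ZZ$. We must show $\delta\equiv 0$. This is the main obstacle: the exponential only detects additivity modulo a lattice, so we need an extra rigidity input.

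First, we use multiplicativity to get homogeneity. For $a\in\CC_\ast$ we have $\delta(aw_1,aw_2)=\varphi(a)\,\delta(w_1,w_2)$. Fix $w_1,w_2$ and let $a$ range over $\CC_\ast$. Then $\varphi(a)$ ranges over all of $\CC_\ast$, since $\varphi$ is bijective. If $\delta(w_1,w_2)=\lambda\neq 0$, choose $a$ with $\varphi(a)\lambda\notin\Lambda$, which is possible because $\Lambda$ is countable. Then $\delta(aw_1,aw_2)\notin\Lambda$, a contradiction. The only care needed is that $aw_1$, $aw_2$ and $a(w_1+w_2)$ stay nonzero, which holds automatically for $a\neq0$. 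Hence $\delta\equiv 0$ and $\varphi$ is additive.

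As a sanity check, the same argument applied with $w_2=w_1$ should be consistent with $\varphi(2)=2$, which we then obtain as a consequence. I expect the scaling trick to be the key step; the only thing to double-check is the multiplicativity formula used for $\delta(aw_1,aw_2)$, namely $\varphi(a(w_1+w_2))=\varphi(a)\varphi(w_1+w_2)$, which is exactly the multiplicativity lemma.
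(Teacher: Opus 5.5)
Your proof is correct and follows the paper's argument. Both first use the exponential relation together with multiplicativity to get additivity modulo $\frac{2\pi i}{c}\ZZ$, and then rescale by an element whose image under $\varphi$ avoids the countably many bad multipliers (the paper takes $\varphi(\varepsilon)\notin\QQ$), which forces the defect to vanish. Your explicit handling of the cases involving $0$, via $\varphi(-1)=-1$, is a small detail the paper leaves implicit.
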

\begin{proof}
By multiplicativity of $\varphi$ we have:
\[
\varphi(\exp(a + b)) = \varphi(\exp(a)\cdot\exp(b)) = \varphi(\exp(a))\cdot\varphi(\exp(b)) 
\]
Applying Lemma \ref{lemma-cstar-exp} to both sides, we obtain:
\[
\exp(c\varphi(a + b))) = \exp(c\varphi(a))\cdot\exp(c\varphi(b)) = \exp(c(\varphi(a)+ \varphi(b)))
\]
This implies that $\varphi$ is additive modulo $\frac{2\pi i}{c} \ZZ$. Combining this with multiplicativity, it is easy to see that $\varphi$ is fully additive:
\[
\varphi(a+b) = \varphi(\varepsilon \cdot (a/\varepsilon + b/\varepsilon))
\]
The l.h.s.\ equals
\[\varphi(a) + \varphi(b) + \frac{2 \pi i}{c} k \quad \text{ for some } k \in \ZZ
,\] and the r.h.s.\ equals
\[
\varphi(a) + \varphi(b) + \varphi(\varepsilon) \frac{2 \pi i}{c} \ell \quad \text{ for some } \ell \in \ZZ.
\]
Choosing $\varepsilon$ s.t.\ $\varphi(\varepsilon)$ is irrational, requires $k = \ell  = 0$.
\end{proof}

\begin{proposition}
\label{prop-Cstar}
Such a mapping $\varphi$ is continuous, and hence holomorphic or anti-holomorphic.
\end{proposition}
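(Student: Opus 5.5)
From the preceding results, $\varphi$, extended by $\varphi(0)=0$, is a bijection $\CC\to\CC$ that is both additive (Corollary) and multiplicative (first Lemma). It is therefore a field automorphism of $\CC$. The classical fact we will use is that a field automorphism of $\CC$ is continuous if and only if it is the identity or complex conjugation. So the whole task is to prove continuity, or, equivalently, that $\varphi$ maps $\mathbb{R}$ into $\mathbb{R}$. Indeed, an automorphism preserving $\mathbb{R}$ fixes $\QQ$ and preserves the positive reals (the squares), hence the order, so it is the identity on $\mathbb{R}$; since it sends $i$ to $\pm i$, it is the identity or conjugation.

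To show $\varphi(\mathbb{R})\subset\mathbb{R}$, I will use more endomorphisms of $\CC_\ast$ and exploit the conjugation identity $\varphi(\exp(w)) = \exp(c\,\varphi(w))$ from Lemma \ref{lemma-cstar-exp}. First I will normalise $c$. Since $\exp(2\pi i)=1$, we have $1=\varphi(1)=\exp(c\varphi(2\pi i))=\exp(c\varphi(2\pi)\varphi(i))$. Combining this with additivity, the kernel of $w\mapsto \exp(c\varphi(w))$ is $\varphi^{-1}(\frac{2\pi i}{c}\ZZ)$. That kernel must equal the kernel $2\pi i\ZZ$ of $\exp$, because $\varphi$ is bijective on $\CC_\ast$. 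Hence $\varphi(2\pi i\ZZ)=\frac{2\pi i}{c}\ZZ$, which gives $c\varphi(2\pi i) = \pm 2\pi i$.

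The key step is a characterisation of $\mathbb{R}$ purely in terms of $\exp$ and the field structure, which $\varphi$ then must respect. A real number $t$ satisfies $t=s^2$ or $t=-s^2$ for some $s\in\mathbb{R}$, but $\mathbb{R}$ is not algebraically defined, so this alone does not suffice. Instead I will use that $|\exp(w)|=1$ exactly when $w\in i\mathbb{R}$. The unit circle is characterised as the set of $z\in\CC_\ast$ such that the forward orbit of $z$ under $z\mapsto z\cdot$ (a suitable endomorphism) stays bounded. More concretely, I will use the characterisation that $w\in i\mathbb{R}$ if and only if $\exp(nw)$, $n\in\ZZ$, never lands in a set of the form $\{|z|>R\}$. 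To make such sets algebraic, I will note that $\{z: \exists\, u\in\CC_\ast,\ u^2 = z \text{ and } \dots\}$ is not enough on its own. The cleanest route is through the non-surjectivity of endomorphisms: the images of holomorphic endomorphisms of $\CC_\ast$ are preserved by $\varphi$ (i.e. $\varphi(f(\CC_\ast))=\Phi(f)(\CC_\ast)$). So I will look for an endomorphism whose image, or whose fibre structure, singles out the unit circle or a half-line.

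The main obstacle is finding such an endomorphism. The candidate is $f(z)=\exp(z+1/z)$, combined with $\varphi$ acting on $\exp$ and on $z^{\pm1}$. The map $g(z)=z+1/z$ takes real values exactly on $\mathbb{R}_\ast\cup S^1$, and this is expressible through the identity $\varphi(g(z))=\varphi(z)+1/\varphi(z)$, which holds by additivity. So I will instead characterise $\mathbb{R}$ via images: the Joukowski-type fibres yield no topology, so I will fall back on the standard trick that a discontinuous field automorphism maps some bounded set onto a dense set. Suppose $\varphi$ were discontinuous. Then $\varphi$ of the compact set $K=\{2\pi i t: t\in[0,1]\}$ is unbounded, indeed dense. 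Applying the conjugated relation, $\exp\circ\, c\varphi$ on $\varphi(\{e^{2\pi i t}\})$ would then force $\varphi(S^1)$ to be dense in $\CC_\ast$. I will then derive a contradiction from the fact that the finite-order points (roots of unity) are dense in $S^1$ and are mapped to roots of unity, together with the fact that $\varphi$ preserves $\{z: z^n=1\}$ for each $n$. Turning this into a genuine contradiction is the delicate point, and I expect it to require one more endomorphism beyond $\exp$ and $z^n$.
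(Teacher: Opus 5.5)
There is a genuine gap: your proposal does not reach the statement. The step that carries all the content, ruling out a discontinuous field automorphism, is explicitly left open at the end.

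The reductions you make are correct: extending by $\varphi(0)=0$ gives a field automorphism, continuity is equivalent to $\varphi(\mathbb{R})\subset\mathbb{R}$, and $c\,\varphi(2\pi i)=\pm 2\pi i$. But none of the routes you sketch can close the gap.
\begin{itemize}
\item Images of endomorphisms carry no information. Every nonconstant $g\in\enmo(\CC_\ast)$ is surjective, because $g\circ\exp$ is a nonconstant entire function omitting $0$, so by Picard it omits nothing else.
\item The map $z+1/z$ is not an endomorphism of $\CC_\ast$, since it vanishes at $\pm i$.
\item Roots of unity cannot yield a contradiction. Every field automorphism permutes the $n$-th roots of unity, and this is perfectly compatible with $\varphi(S^1)$ being dense.
\end{itemize}
More fundamentally, the only data your argument actually uses are the field structure and the identity $\varphi\circ\exp=\exp(c\,\varphi)$. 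For $c=1$ this says precisely that $\varphi$ is an automorphism of the exponential field $(\CC,+,\cdot,\exp)$. Whether every such automorphism is the identity or complex conjugation is a well-known open problem, and Zilber's conjecture on pseudo-exponentiation predicts many others. So no argument built only from $\exp$, the powers $z^n$ and algebra can be expected to succeed.

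The missing idea is analytic: use the \emph{continuity} of a conjugated map $\varphi\circ g\circ\varphi^{-1}$, for an endomorphism $g$ tailored to a sequence witnessing the discontinuity. This is what the paper does.
\begin{itemize}
\item After a complex conjugation, $\varphi$ is the identity on $\QQ+i\QQ$. Take $a_n\to 1$ with $\varphi(a_n)\to A\neq 1$, normalised so that $A\in\CC_\ast$.
\item Hinkkanen's construction gives an entire $f$ with $f(0)=0$, $f(\CC_\ast)\subset\CC_\ast$ and $f\bigl((1-a_n)/m\bigr)\in\QQ+i\QQ$ for all $n,m$. Then $\widetilde f(z):=\exp(f(1-z))$ lies in $\enmo(\CC_\ast)$.
\item On one hand, $\varphi\bigl(\widetilde f(1-(1-a_n)/m)\bigr)=\exp\bigl(c\,f((1-a_n)/m)\bigr)\to 1$. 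On the other hand, $\varphi(1-(1-a_n)/m)\to 1-(1-A)/m$.
\item Continuity of the holomorphic map $\varphi\circ\widetilde f\circ\varphi^{-1}$ then forces $f$ to take values in $2\pi i\ZZ$ on a sequence of nonzero points tending to $0$. This contradicts $f(0)=0$ together with $f\neq 0$ on $\CC_\ast$.
\end{itemize}
This is exactly the ``one more endomorphism'' you anticipated, and some input of this kind, beyond the exponential-field structure, is indispensable.
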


\begin{proof}
Assume to get a contradiction that $\varphi$ is not continuous at $1$. Then there exists a sequence $(a_n)_n \subset \CC_\ast$ such that $\lim\limits_{n \to \infty} a_n = 1$ but $\lim\limits_{n \to \infty} \varphi(a_n) \neq 1$. Since $\varphi$ is additive and multiplicative, it equals the identity (if necessary, after complex conjugation $z \mapsto \overline{z}$) on $\QQ + i \QQ$. 

We may pass to a subsequence such that $\varphi(a_n)$ is either convergent with limit $A \in \CC_\ast$ or leaving any compact of $\CC_\ast$. We may assume that either $0 < |A| \leq 1$ or that $|\varphi(a_n)|$ tends to $\infty$, for otherwise we can replace the sequence $a_n$ by $1/a_n$. If  $|\varphi(a_n)|$ tends to $\infty$, we may choose a sequence of numbers $w_n \in \QQ + i \QQ$ such that $\lim_{n \to \infty} w_n = 0$ and $\lim_{n \to \infty} w_n \cdot (1 - \varphi(a_n)) = A$ with $A = 1/2$. Consider the sequence $b_n := 1 - w_n \cdot (1 - a_n)$. We have that $\lim_{n \to \infty} b_n = 1$ and $\lim_{n \to \infty} \varphi(b_n) = 1 -  \lim_{n \to \infty} w_n \cdot (1 - \varphi(a_n)) = 1/2$. We may assume that $0 < |A| \leq 1$ for otherwise we can replace $a_n$ by $b_n$.

We follow the last part of Hinkkanen's proof. In \cite{00126289} on page 153, he constructs a holomorphic function $f \colon \CC \to \CC$ with the following properties: Let $(z_n)_n \subset \CC$ be a sequence with $\lim_{n \to \infty} z_n = 0$. Then there exists such an $f$ with $f(0) = 0$, $f(\CC_\ast) \subseteq \CC_\ast$ and $f(z_n) \in \QQ + i \QQ$ for all $n \in \NN$.

We consider $((1 - a_k)/m)_{k, m}$. By a standard diagonalization argument, we obtain the sequence $(z_n)_n$ which contains $((1 - a_k)/m)_{k}$ as a subsequence for every $m$ and converges to $0$. 
We define $\widetilde{f}(z) := \exp(f(1 - z))$. Finally, we have that
\begin{align*}
\underbrace{\varphi \circ \widetilde{f} \circ \varphi^{-1}}_{\in \enmo(\CC_\ast)}\left(\varphi\left(1-\frac{1-a_n}{m}\right)\right) &= \varphi(\exp(f((1 - a_n)/m))) \\
 &= \exp(c\varphi(f(z_n))) = \exp(cf(z_n))
\end{align*}
for an appropriate subsequence of $z_n$ where $m$ is fixed. 
The r.h.s.\ converges to $1$, whereas the l.h.s.\ converges to $\varphi \circ \widetilde{f} \circ \varphi^{-1}\left(1-\frac{1 - A}{m}\right)$. Since $\varphi^{-1}(1) = 1$, this requires 
$f\left(\frac{1 - A}{m}\right) \in 2 \pi i \ZZ$ for every $m$. Either $f$ would therefore be unbounded near $0$ or identically equal to an integer multiple of $2 \pi i$, a contradiction. 

A continuous, additive and multiplicative $\varphi$ with $\varphi(i) = i$ equals the identity. 
\end{proof}

\section{Invariance under iso-conjugating mappings}
We present below a collection of invariance properties that are preserved under iso-conjugating mappings. These properties will be invoked later in the proof of Theorem~\ref{theorem-reinhardt} and Theorem~\ref{theorem-punctured-plane}.

First, for $F\in\enmo(X)$ we define 
\begin{math}
    \Fix(F):=\{x\in X:F(x)=x\}
\end{math} 
and call it {\it the set of fixed points} of the endomorphism $F$.

For iso-conjugated endomorphisms $F\in\enmo(X)$ and $G\in\enmo(Y)$ we have the equality $G\circ\psi=\psi\circ F$ or $G=\psi\circ F\circ \psi^{-1}$.

We get the following 
\begin{lemma}\label{lemma:fixed-points}
    Let $\psi \colon X\to Y$ be an iso-conjugating mapping conjugating  $F\in\enmo(X)$ and $G\in\enmo(Y)$. Then
\begin{equation}
    \psi(\Fix(F))=\Fix(G).
\end{equation}
\end{lemma}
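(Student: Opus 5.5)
The claim follows directly from the relation $G\circ\psi=\psi\circ F$ together with bijectivity of $\psi$. No topology or holomorphy of $\psi$ is needed, only that $\psi$ is a bijection satisfying this intertwining identity, which is exactly what the definition of an iso-conjugating mapping (via Lemma~\ref{lem-conjugation}) provides. I will prove the two inclusions separately.

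For $\psi(\Fix(F))\subseteq\Fix(G)$, take $x\in\Fix(F)$, so $F(x)=x$. Then
\[
G(\psi(x))=\psi(F(x))=\psi(x),
\]
so $\psi(x)\in\Fix(G)$.

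For the reverse inclusion, take $y\in\Fix(G)$. Since $\psi$ is bijective, I can write $y=\psi(x)$ with $x=\psi^{-1}(y)$. Then
\[
\psi(F(x))=G(\psi(x))=G(y)=y=\psi(x).
\]
Injectivity of $\psi$ gives $F(x)=x$, so $x\in\Fix(F)$ and $y=\psi(x)\in\psi(\Fix(F))$.

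There is no real obstacle. The only point to note is that the reverse inclusion uses injectivity of $\psi$, and that the argument is symmetric: one may equally conjugate by $\psi^{-1}$ via $F=\psi^{-1}\circ G\circ\psi$.
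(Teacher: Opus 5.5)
Your proof is correct and follows the paper's. The forward inclusion is exactly the paper's computation. The paper handles the reverse inclusion by remarking that one inclusion suffices, by symmetry under $\psi^{-1}$; your direct injectivity argument spells out that step explicitly.
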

\begin{proof} Actually, it is sufficient to show one inclusion, so let $x\in \Fix(F)$. Then
\begin{equation}
    G(\psi(x))=\psi(F(x))=\psi(x),
\end{equation}
so $\psi(x)\in\Fix G$.
\end{proof}

Similarly, iso-conjugating mappings leave the retracts invariant. Recall that an endomorphism $r\in\enmo(X)$ is called {\it a retraction} if $r\circ r=r$. Then $M:=r(X)$ is called {\it a retract}.

\begin{lemma}
\label{lemma-retract}
Let $\psi \colon X\to Y$ be an iso-conjugating mapping and let $M$ be a retract in $X$. Then $\psi(M)$ is a retract in $Y$.
\end{lemma}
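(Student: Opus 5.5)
The plan is to transport the retraction itself through the conjugation. Let $r \in \enmo(X)$ be a retraction with $r(X) = M$. Since $\psi$ is iso-conjugating, the induced map $\Phi(f) = \psi \circ f \circ \psi^{-1}$ sends $\enmo(X)$ into $\enmo(Y)$. Hence $s := \psi \circ r \circ \psi^{-1}$ is a holomorphic endomorphism of $Y$, and we will show that it is a retraction with image $\psi(M)$.

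First, $s$ is idempotent. Because $\psi^{-1}\circ\psi = \mathrm{id}_X$, we have
\[
s \circ s = \psi \circ r \circ \psi^{-1} \circ \psi \circ r \circ \psi^{-1} = \psi \circ (r \circ r) \circ \psi^{-1} = \psi \circ r \circ \psi^{-1} = s .
\]
This is just the statement that $\Phi$ is a semigroup homomorphism. Second, since $\psi$ is a bijection, $\psi^{-1}(Y) = X$, and therefore
\[
s(Y) = \psi\bigl(r(\psi^{-1}(Y))\bigr) = \psi(r(X)) = \psi(M).
\]
So $\psi(M)$ is the image of the retraction $s \in \enmo(Y)$, that is, a retract of $Y$.

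An alternative route uses Lemma \ref{lemma:fixed-points}. For a retraction, $r(X) = \Fix(r)$, so that lemma gives $\psi(M) = \psi(\Fix(r)) = \Fix(s)$, which equals $s(Y)$ because $s$ is idempotent.

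No step here is a real obstacle. The one point to be careful about is that holomorphy of $s$ is not something to be proved: it comes directly from the definition of a (iso-)conjugating mapping, whose induced map $\Phi$ lands in $\enmo(Y)$. The argument does not use that $\Phi$ is surjective. Surjectivity is only needed for the converse, namely that retracts of $Y$ pull back to retracts of $X$.
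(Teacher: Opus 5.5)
Your proof is correct and follows the paper's argument exactly: conjugate the retraction to $\psi\circ r\circ\psi^{-1}\in\enmo(Y)$, then check that it is idempotent with image $\psi(r(X))=\psi(M)$. You are also right that this uses only that $\psi$ is conjugating, not that the induced homomorphism is surjective.
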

\begin{proof}
    Let $r \colon X\to M$ be a holomorphic retraction. Then the holomorphic endomorphism $\widetilde{r}:=\psi\circ r\circ\psi^{-1}$ of $Y$ is a retraction with $\widetilde{r}(Y)=\psi(r(X))=\psi(M)$.
\end{proof}

It is evident that the property of having the same number of preimages of an endomorphism is an invariant of iso-conjugating mappings.

\begin{lemma}
    Let $\psi \colon X\to Y$ be an iso-conjugating mapping, where $X,Y$ have the same dimension. Let $F\in\enmo(X)$ and $G\in\enmo(Y)$ be two iso-conjugated (by $\psi$) endomorphisms and fix a positive integer $k$. 
    
    Then for every $x\in X$ $\#F^{-1}(x)=k$ iff for every $y\in Y$ $\#G^{-1}(y)=k$.
\end{lemma}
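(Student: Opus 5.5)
The idea is that $\psi$ is a bijection $X \to Y$ with $G = \psi \circ F \circ \psi^{-1}$, equivalently $G \circ \psi = \psi \circ F$. From this I will derive the set identity
\[
G^{-1}(\psi(x)) = \psi\bigl(F^{-1}(x)\bigr) \quad \text{for every } x \in X.
\]
Since $\psi$ is a bijection, the two fibres then have the same cardinality, and the statement follows by letting $y = \psi(x)$ run over all of $Y$.

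The key step is the double inclusion for this identity.

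\emph{First inclusion.} Let $x' \in F^{-1}(x)$. Then $G(\psi(x')) = \psi(F(x')) = \psi(x)$, so $\psi(x') \in G^{-1}(\psi(x))$.

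\emph{Second inclusion.} Let $y' \in G^{-1}(\psi(x))$. Surjectivity of $\psi$ gives $x' \in X$ with $\psi(x') = y'$. Then $\psi(F(x')) = G(\psi(x')) = \psi(x)$, and injectivity of $\psi$ gives $F(x') = x$, so $x' \in F^{-1}(x)$.

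Hence $\psi$ restricts to a bijection $F^{-1}(x) \to G^{-1}(\psi(x))$, and
\[
\#F^{-1}(x) = \#G^{-1}(\psi(x)).
\]
I read the statement as: $\#F^{-1}(x) = k$ for every $x \in X$ if and only if $\#G^{-1}(y) = k$ for every $y \in Y$. Given that reading, the equivalence follows because $\psi$ maps $X$ bijectively onto $Y$. The same argument shows the pointwise version as well, namely $\#F^{-1}(x) = k$ if and only if $\#G^{-1}(\psi(x)) = k$.

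There is no real obstacle here. The only subtle point is that the reverse inclusion genuinely needs injectivity of $\psi$, which we have because $\psi$ is bijective. Note also that the hypotheses that $X$ and $Y$ have the same dimension, and that $\Phi$ is an isomorphism rather than merely a homomorphism, are not used. The argument works verbatim for any conjugating mapping.
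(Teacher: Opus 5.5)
Your proof is correct. The identity $G^{-1}(\psi(x))=\psi(F^{-1}(x))$, which follows from $G\circ\psi=\psi\circ F$ and the bijectivity of $\psi$, is exactly why the paper calls this invariance ``evident'' (it gives no written proof), and you are right that neither the dimension hypothesis nor the isomorphism property is needed.
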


\section{Manifolds containing $\CC_\ast$ as a retract}

\begin{proposition}
\label{prop-key}
Let $X$ be a complex manifold and let $Y$ be any complex space. 
Assume there exists a proper holomorphic embedding $i \colon Z \to X$ of a complex manifold $Z$ such that the following holds:
\begin{enumerate}
\item \label{property1} $\forall f \in \holo(i(Z), i(Z)) \; \exists F \in \holo(X, X) \,:\, F|i(Z) = f$ and $F(X) = i(Z)$.
\item \label{property2} $\exists G_1, \dots, G_q \in \holo(X, X), \; \exists x_1, \dots, x_q \in X \;:$
\[i(Z) = \bigcap_{j=1}^{q} G_j^{-1}(x_j)\]
\item \label{property3} For any complex space $W$ and any conjugating mapping $\varphi \colon Z \to W$ we require that $Z$ is biholomorphic or anti-biholomorphic to $W$.
\item \label{property4} There exists a proper holomorphic embedding $H \colon X \to Z^n$.
\end{enumerate}
Let $\Phi \colon \enmo(X) \to \enmo(Y)$ be an isomorphism of semi-groups. Then there exists a biholomorphic or anti-biholomorphic $\varphi \colon X \to Y$ such that $\Phi(f) = \varphi \circ f \circ \varphi^{-1}$. 
\end{proposition}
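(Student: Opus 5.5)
By Lemma~\ref{lem-conjugation}, $\Phi$ is induced by a bijection $\varphi \colon X \to Y$ with $\Phi(f) = \varphi \circ f \circ \varphi^{-1}$. The strategy is to restrict $\varphi$ to the submanifold $i(Z)$, show that this restriction is (anti-)holomorphic using property~(\ref{property3}), and then transport the regularity to all of $X$ through the embedding $H$ of property~(\ref{property4}).

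\textbf{Step 1: $\varphi(i(Z))$ is an analytic subset of $Y$ carrying a conjugating map.} By property~(\ref{property1}) applied to $f = \mathrm{id}$, there is a retraction $r \in \enmo(X)$ with $r(X) = i(Z)$. Lemma~\ref{lemma-retract} then says $W := \varphi(i(Z))$ is the image of the retraction $\widetilde r = \varphi \circ r \circ \varphi^{-1}$ of $Y$. Hence $W$ is a closed complex subspace of $Y$: the image of a holomorphic retraction of a complex space is the fixed point set $\Fix(\widetilde r)$ (Lemma~\ref{lemma:fixed-points}), which is analytic. One can also use property~(\ref{property2}): $W = \bigcap_j \Phi(G_j)^{-1}(\varphi(x_j))$, which exhibits $W$ as analytic directly.

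\textbf{Step 2: the restriction is conjugating.} Let $\varphi_Z := \varphi \circ i \colon Z \to W$. For any $f \in \enmo(Z)$, property~(\ref{property1}) gives $F \in \enmo(X)$ extending $i \circ f \circ i^{-1}$ with image in $i(Z)$. Then $\Phi(F)$ is holomorphic on $Y$, maps into $W$, and restricts on $W$ to $\varphi_Z \circ f \circ \varphi_Z^{-1}$. So that conjugate is a holomorphic self-map of the complex space $W$, and $\varphi_Z$ is a conjugating mapping $Z \to W$.

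Property~(\ref{property3}) gives a biholomorphism or anti-biholomorphism $Z \cong W$. The main obstacle, which I expect to be the hardest point, is that we need $\varphi_Z$ \emph{itself} to be (anti-)holomorphic, not merely that $Z$ and $W$ are equivalent. To get this, I would compose with the abstract equivalence $\sigma \colon W \to Z$ to obtain a conjugating self-map $\sigma \circ \varphi_Z$ of $Z$. I would then invoke the version of property~(\ref{property3}) that is actually proven, for instance Proposition~\ref{prop-Cstar} for $Z = \CC_\ast$, to conclude that this self-map is (anti-)holomorphic. It follows that $\varphi_Z$ is (anti-)holomorphic. I would read property~(\ref{property3}) as guaranteeing exactly this.

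\textbf{Step 3: extension to $X$.} Write $H = (H_1, \dots, H_n)$ with $H_k \colon X \to Z$, and consider $F_k := i \circ H_k \in \enmo(X)$, whose image lies in $i(Z)$. Then
\[
\Phi(F_k) \circ \varphi = \varphi \circ i \circ H_k = \varphi_Z \circ H_k .
\]
Hence $\widetilde H := (\varphi_Z^{-1} \circ \Phi(F_1), \dots, \varphi_Z^{-1} \circ \Phi(F_n)) \colon Y \to Z^n$ satisfies $\widetilde H \circ \varphi = H$. By Step~2, $\widetilde H$ is holomorphic or anti-holomorphic, with the same type in every component because $\varphi_Z$ is fixed. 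It is injective, since $H$ is injective and $\varphi$ is bijective, and its image is $H(X)$. Therefore
\[
\varphi = \widetilde H^{-1}|_{H(X)} \circ H
\]
is a bijection $X \to Y$ whose inverse $\widetilde H$ (onto $H(X)$) is (anti-)holomorphic. Here $H(X)$ is a closed submanifold.

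It remains to show that $Y$ is a manifold and that $\varphi$ itself is (anti-)holomorphic. First, $\varphi^{-1} = H^{-1} \circ \widetilde H$ is an (anti-)holomorphic bijection $Y \to X$ onto a manifold. Next, $\varphi$ is continuous: the endomorphisms $\Phi$ applied to maps of the form $\text{const}\cdot$ (coordinates through $H$) separate points, and $\widetilde H$ is a homeomorphism onto the closed set $H(X)$. I would verify that $\widetilde H$ is proper using the conjugated retraction and the properness of $H$. A bijective holomorphic map of a complex space onto a manifold that is a homeomorphism has holomorphic inverse after normalization; to avoid normality issues, I would use that $\varphi^{-1}$ is an immersion, which holds because its composition with the embedding $H$ is $\widetilde H$. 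Together these give that $\varphi$ is biholomorphic or anti-biholomorphic.
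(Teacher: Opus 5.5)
Your proposal follows the paper's proof essentially step for step. The shared steps are:
\begin{itemize}
\item Schreier's lemma gives the bijection $\varphi$.
\item $B=\varphi(i(Z))$ is analytic by property (2); your alternative via $\Fix$ of the conjugated retraction also works.
\item $\varphi\circ i$ is conjugating by property (1).
\item $H\circ\varphi^{-1}=\bigl((\varphi\circ i)^{-1}\circ\Phi(i\circ h_k)\bigr)_k$, so $\varphi^{-1}=H^{-1}\circ(H\circ\varphi^{-1})$ is an (anti-)holomorphic bijection.
\end{itemize}
Your concern about property (3) is justified. The paper bridges it with ``without loss of generality $\varphi\circ i$ is biholomorphic''. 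In the proof of Theorem~\ref{theorem-punctured-plane} it then applies Proposition~\ref{prop-Cstar} directly to $\varphi|A$, which is exactly the stronger reading you adopt.

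Your final paragraph is the only place where you go beyond the paper, and there the reasoning does not hold up. The paper simply asserts that the holomorphic bijection $\varphi^{-1}$ is biholomorphic. You rightly notice that this needs an argument, but what you offer assumes the conclusion:
\begin{itemize}
\item Continuity of $\varphi$ is equivalent to $\widetilde H$ being a homeomorphism onto $H(X)$, and you assume this rather than prove it. Point separation only gives injectivity, and properness of $\widetilde H$ is merely announced.
\item The identity $H\circ\varphi^{-1}=\widetilde H$ shows that $\varphi^{-1}$ is an immersion only if $\widetilde H$ is already known to be one. That is circular.
\end{itemize}
Some further use of the conjugation is genuinely needed here. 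A holomorphic bijection from a complex space onto a manifold need not be biholomorphic: take $\CC_\ast\sqcup\{\mathrm{pt}\}\to\CC$, the identity on $\CC_\ast$ with $\mathrm{pt}\mapsto 0$.

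The real missing ingredient is continuity of $\varphi$. Once you have it, no immersion argument is required, because a holomorphic homeomorphism from a reduced space onto a manifold is biholomorphic. Indeed, $\varphi$ is then continuous and holomorphic off a nowhere dense analytic subset of $X$, and Riemann extension applies.

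As it stands your proof is as complete as the paper's. The last paragraph should either be removed or replaced by an actual proof that $\varphi$ is continuous.
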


\begin{proof}
By Lemma \ref{lem-conjugation} there exists a bijective mapping $\varphi \colon X \to Y$ such that
$\Phi(f) = \varphi \circ f \circ \varphi^{-1}$. Denote $A := i(Z)$ and $B = \varphi(A)$. 
By Property \ref{property2} we have that
\[
B = \varphi(i(Z)) = \bigcap_{j=1}^{q} \varphi \circ G_j^{-1} \circ \varphi^{-1}( \varphi(x_j))
\]
and since $\varphi \circ G_j^{-1} \circ \varphi^{-1}$ is holomorphic, $B$ is an analytic subset of $Y$ and hence a complex space. By Property \ref{property1}, we finally obtain every holomorphic self-mapping of $A$ as the restriction a holomorphic self-mapping of $X$ which is by assumption conjugated to a holomorphic self-mapping of $Y$ that restricts to a self-mapping of $B$. Note that we do a priori not know that the restriction $\varphi | A$ is iso-conjugating, only that it is conjugating even though $\varphi$ is iso-conjugating. Applying Property \ref{property3} to $W = B$, we conclude that $B$ is biholomorphic or anti-biholomorphic to $Z$. Without loss of generality, we may assume that $\varphi \circ i \colon Z \to W$ is biholomorphic, for otherwise we just replace the whole $X$ by its complex conjugate.

For $f \in \holo(X, Z)$ we consider the mapping 
\[
f \mapsto (\varphi \circ i)^{-1} \circ (\varphi \circ (i \circ f) \circ \varphi^{-1}) = f \circ \varphi^{-1}
\]
which is in $\holo(Y, Z)$ as the composition of holomorphic mappings. 

Let $H = (h_1, \dots, h_n) \colon X \to Z^n$ be the proper holomorphic embedding that exists according to Property \ref{property4}. Then $H \circ \varphi^{-1} = (h_1 \circ \varphi^{-1}, \dots, h_n \circ \varphi^{-1}) \colon Y \to Z^n$ is now also a holomorphic injection. 

By definition, the images of $H$ and $H \circ \varphi^{-1}$ coincide. We conclude that $\varphi^{-1} = H^{-1} \circ (H \circ \varphi^{-1}) \colon Y \to X$ is biholomorphic. 
\end{proof}

\begin{remark}
\label{rem-CtoCstar}
If $h = (h_1, \dots, h_n) \colon X \to \CC^n$ is an injective holomorphic mapping, then 
$H := (e^{h_1}, e^{\sqrt{-1} h_1}, \dots, e^{h_n}, e^{\sqrt{-1} h_n}) \colon X \to (\CC^{\ast})^{2n}$ is also an injective holomorphic mapping. If $h$ is proper, then so is $H$.
\end{remark}

\begin{proof}[Proof of Theorem \ref{theorem-punctured-plane}]
We apply Proposition \ref{prop-key} with $Z = \CC_\ast$.
\begin{enumerate}
\item Every continuous mapping from the Stein manifold $X$ to the Oka manifold $i(\CC_\ast)$ is homotopic to a holomorphic mapping from $X$ to $i(\CC_\ast)$ and moreover, the values on the subvariety $i(\CC_\ast) \subset X$ can be prescribed, see \cite{MR3700709}*{Corollary 5.4.5} in the monograph of Forstneri\v{c}. Hence, being a topological retract of $X$ is equivalent to being a holomorphic retract of $X$ for $i(\CC_\ast)$. 
\item Since $X$ is Stein, by Remark \ref{rem-CtoCstar} every analytic subset of $X$ can be written as a finite intersection of pre-images of holomorphic mappings $f_j \colon X \to \CC_\ast$, hence as a finite intersection of pre-images of holomorphic self-mappings $i \circ f_j \colon X \to X$.
\item By assumption, every holomorphic automorphism of $A := i(\CC_\ast)$ is the restriction of a holomorphic self-mapping of $X$. Hence, $\CC_\ast$ acts transitively on $B := \varphi(A)$ which is an analytic subset of $B$, and therefore a smooth complex manifold.

$B$ is connected: Let $F \colon X \to A$ be a holomorphic retraction. Assume to get a contradiction that $G \colon B \to B$ is such that $G$ equals the identity on one connected component of $B$ but is constant on all other connected components of $B$. By Lemma \ref{lemma-retract}, $G \circ \varphi \circ F \circ \varphi^{-1} \colon Y \to B$ is a holomorphic self-mapping that we can conjugate back to a holomorphic self-mapping $\varphi^{-1} \circ G \circ \varphi \circ F \colon X \to A$ which restricts to a non-constant holomorphic self-mapping $A \to A$ that omits an infinite number of points which contradicts Picard's theorem.

Next, we push forward the group structure of $\CC_\ast$ to $B$. Again the assumption implies that left- and right-multiplication on $B$ are holomorphic. By Hartogs's separate analyticity theorem and by the implicit function theorem, $B$ is a complex Lie group with its holomorphic structure induced by $Y$. Since $B$ is abelian, it is isomorphic to $\CC^n \times (\CC_\ast)^m \times T$ where $T$ is a toroidal group. By considering the number of group elements of finite order, we can exclude all factors but $\CC_\ast$.

Finally, we can apply Proposition \ref{prop-Cstar} to $\varphi|A \colon A \to B$.
\item Since $X$ is Stein, by Remark \ref{rem-CtoCstar} we can embed it properly holomorphically into $(\CC_\ast)^N$ for $N$ large enough. \qedhere
\end{enumerate}
\end{proof}

\section{Pseudoconvex Reinhardt domains in higher dimensions} 

\subsection{Geometry of pseudoconvex Reinhardt domains} Before we go on to the proof of Theorem~\ref{theorem-reinhardt} and show how the two-dimensional pseudoconvex Reinhardt domains are determined by their endomorphisms, we recall some basic notions and facts on pseudoconvex Reinhardt domains in arbitrary dimensions. As a good reference for the properties of Reinhardt domains that we need in the sequel we refer the reader to monographs \cite{MR2396710} and \cite{MR4201928} by Jarnicki and Pflug.

A domain $D\subset\mathbb C^n$ is called {\it Reinhardt} if $D$ is invariant under rotations in each coordinate separately, i.e.\ if $z\in D$ then $(\omega_1z_1,\ldots,\omega_nz_n)\in D$, $|\omega_j|=1$. Then, it makes then sense to define {\it the logarithmic image} of $D$
\begin{equation}
    \log D:=\{x\in\mathbb R^n:(e^{x_1},\ldots,e^{x_n})\in D\}.
\end{equation}
We also define $V_j:=\mathbb C^{j-1}\times\{0\}\times\mathbb C^{n-j}$, $j=1,\ldots,n$.

The pseudoconvexity of $D$ is completely understood. More precisely, 
the characterization may be given in the following inductive way: A Reinhardt domain $D$ is pseudoconvex iff $\log D$ is convex and for all $j=1,\ldots,n$ the intersection $D\cap V_j$ is either empty or is a pseudoconvex domain (as a subset of $\mathbb C^{n-1}$).

We shall often rely on the following characterization of hyperbolic pseudoconvex Reinhardt domains (see Theorem 1.1 and 1.2 in \cite{MR1678013}, see also \cite{MR1785672}).
\begin{theorem}\label{theorem:hyperbolic-reinhardt}
    Let $D$ be a pseudoconvex Reinhardt domain. Then the following are equivalent:
    \begin{itemize}
        \item $D$ is (Carath\'eodory or Kobayashi) hyperbolic,
        \item $\log D$ contains no real line and $D\cap V_j$ is either empty or is (Carath\'eodory or Kobayashi) hyperbolic $j=1,\ldots,n$,
        \item $D$ is biholomorphic to a bounded Reinhardt domain.
    \end{itemize}
\end{theorem}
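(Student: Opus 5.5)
The plan is to prove the three implications cyclically, working by induction on $n$; the case $n=1$ is classical, since a Reinhardt domain in $\CC$ is a disc, an annulus, a punctured disc, $\CC_\ast$ or $\CC$.

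\emph{Third condition implies first.} This is immediate. Carath\'eodory and Kobayashi hyperbolicity are biholomorphic invariants, and every bounded domain is Carath\'eodory hyperbolic, hence also Kobayashi hyperbolic.

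\emph{First condition implies second.} I argue by contraposition.
\begin{itemize}
\item Suppose $\log D$ contains a real line $a+\mathbb R v$. Convexity of $\log D$ lets me assume $v$ is a direction of recession. If $v\in\mathbb Q^n$, clear denominators to get $v\in\ZZ^n$, and consider $\lambda\mapsto (e^{a_1}\lambda^{v_1},\dots,e^{a_n}\lambda^{v_n})$ from $\CC_\ast$ into $D$. Rotation invariance of $D$ puts its image in $D$. This map is nonconstant, which contradicts Carath\'eodory hyperbolicity by Liouville's theorem. It also contradicts Kobayashi hyperbolicity, since the Kobayashi pseudodistance of $\CC_\ast$ vanishes.
\item If $v$ is irrational, I use instead $\lambda\mapsto (e^{a_j+v_j\lambda})_j$ from $\CC$, with $\lambda=s+it$. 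Its logarithmic image is $a+sv$, which lies on the line, so the map lands in $D$ and again gives a nonconstant entire curve.
\item If $D\cap V_j$ is nonempty and not hyperbolic, then, being a subset of $D$, it carries a nonconstant holomorphic image of $\CC$ (or the vanishing of the pseudodistance is inherited). Distance-decreasing of the inclusion then destroys hyperbolicity of $D$.
\end{itemize}

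\emph{Second condition implies third.} This is the substantial part. Since $\log D$ is convex and contains no line, its recession cone $C$ is a closed convex cone containing no line. Hence the dual cone $C^\ast$ has nonempty interior. I then pick $n$ linearly independent integer vectors $\alpha^1,\dots,\alpha^n\in\ZZ^n$ in $-\operatorname{int}C^\ast$ (or its appropriate sign), chosen so that the linear forms $\langle\alpha^k,x\rangle$ are bounded above on $\log D$. The candidate biholomorphism is the monomial map $\Psi(z)=(z^{\alpha^1},\dots,z^{\alpha^n})$, after normalizing so that the exponent matrix is unimodular or otherwise injective on $D$. The required facts are:
\begin{itemize}
\item Injectivity uses that $\alpha$ is a $\ZZ$-basis, i.e.\ $\det=\pm1$. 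Integer vectors with $\det=\pm1$ can be found in any open cone, since $GL_n(\ZZ)$ acts with dense orbits on directions.
\item Boundedness of $\Psi(D)$ on the torus part $D\cap(\CC_\ast)^n$ follows because the forms are bounded above.
\item Negative exponents must not occur in coordinates where $D$ meets $V_j$. So on the $V_j$ intersecting $D$, I need $\alpha^k_j\ge0$. This is where the inductive hypothesis enters: $D\cap V_j$ being hyperbolic, together with pseudoconvexity, forces $\log D$ to be unbounded in the $-e_j$ direction. Thus $-e_j\in C$, which forces $\langle\alpha,-e_j\rangle\le0$, i.e.\ $\alpha_j\ge0$, for $\alpha\in -C^\ast$.
\end{itemize}
With nonnegative exponents in those coordinates, $\Psi$ extends holomorphically across $D\cap V_j$. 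Injectivity there is then checked by using induction on the lower-dimensional slices.

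\emph{Main obstacle.} The hard step is reconciling the unimodularity requirement with the sign constraints $\alpha_j\ge0$ on the boundary face of the cone. When $C$ contains some $-e_j$, the admissible open cone touches a coordinate hyperplane, and one needs a unimodular basis lying in the closed cone with prescribed zero patterns. I would try first to handle this by splitting coordinates into those $j$ with $D\cap V_j\neq\emptyset$ and the rest, and choosing a block-triangular unimodular matrix. If needed, I would fall back on citing the structure results in Jarnicki--Pflug.
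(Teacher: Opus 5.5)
The paper does not prove this statement; it quotes it from Zwonek's work, where the bounded model is also obtained by a monomial map, so your overall strategy is the right kind. Your implications (3)$\Rightarrow$(1) and (1)$\Rightarrow$(2) are fine. The rational/irrational split is unnecessary, since $\lambda\mapsto(e^{a_j+v_j\lambda})_j$ works for every $v$.

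\textbf{The gap is in (2)$\Rightarrow$(3)}, at the point where the hypothesis on the slices $D\cap V_j$ has to be used. Let $J:=\{j: D\cap V_j\neq\emptyset\}$.
\begin{itemize}
\item The fact $-e_j\in C$ for $j\in J$ follows from $D\cap V_j\neq\emptyset$ alone, by openness plus rotation invariance. So hyperbolicity of $D\cap V_j$ plays no role anywhere in your argument.
\item Every $\alpha$ in the interior of $-C^\ast$ has $\alpha_j>0$ for \emph{all} $j\in J$. (That interior is the only region where boundedness above of $\langle\alpha,\cdot\rangle$ on $\log D$ comes for free.) Hence your $\Psi$ vanishes identically on each $D\cap V_j$, $j\in J$. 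It is never injective there, and the promised check ``by induction on the slices'' cannot succeed.
\end{itemize}
Concretely, take $D=\{(z_1,z_2)\in\CC\times\CC_\ast: |z_1|<\exp\bigl(-(\log|z_2|)^2\bigr)\}$. It is pseudoconvex, and $\log D=\{x_1<-x_2^2\}$ contains no line. The unimodular rows $(1,1),(1,2)$ meet all your stated requirements: they lie in the interior of $-C^\ast$, give bounded monomials, and have nonnegative exponents of $z_1$. Yet $\Psi(z)=(z_1z_2,z_1z_2^2)$ collapses $D\cap V_1=\{0\}\times\CC_\ast$ to the origin. Here $D\cap V_1$ is not hyperbolic and neither is $D$. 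Your construction never invokes the hypothesis on $D\cap V_1$, so it cannot tell this example apart from one satisfying (2).

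\textbf{What is needed} is to put the rows on the boundary of $-C^\ast$ in block-triangular form: $\Psi_j(z)=z_j\,z_{J^c}^{b_j}$ for $j\in J$ and $(\Psi_k)_{k\notin J}=z_{J^c}^{M}$. The exponent matrix is then $\bigl(\begin{smallmatrix} I & B\\ 0 & M\end{smallmatrix}\bigr)$ with $M$ a unimodular integer matrix. Unimodularity of the whole matrix is automatic, and $\Psi$ is injective on all of $D$, because the $J^c$-coordinates never vanish on $D$. So the lattice problem you single out is not the real difficulty.

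The real difficulty is boundedness, which is no longer automatic on the boundary of the cone, and this is where the hypothesis enters.
\begin{itemize}
\item By relative completeness of pseudoconvex Reinhardt domains, the projections of $\log D$ onto the coordinates $J^c$ and $\{j\}\cup J^c$ lie in the logarithmic images of the slices $D\cap\bigcap_{i\in J}V_i$ and $D\cap\bigcap_{i\in J\setminus\{j\}}V_i$.
\item These logarithmic images, and hence the projections, contain no lines. If the slice is $D$ itself this is the hypothesis on $\log D$. Otherwise the slice sits in some hyperbolic $D\cap V_i$, so it is hyperbolic and your (1)$\Rightarrow$(2) applies.
\item Take the rows of $M$ in the interior of the polar of the recession cone of the first projection. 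Take $b_j$ an integer point near $t\beta$, with $t\gg1$ and $\beta$ in that interior.
\item A compactness argument on the recession cone of the second projection, which is pointed and contains $-e_j$, then gives $\sup_{\log D}\bigl(x_j+\langle b_j,x_{J^c}\rangle\bigr)<\infty$.
\end{itemize}
Without this step, (2)$\Rightarrow$(3) is not proved.
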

Let us also recall that the biholomorphism to a bounded domain in Theorem~\ref{theorem:hyperbolic-reinhardt} may be realized as an algebraic mapping (all the coordinates will be monomials).

\subsection{Determining of Reinhardt domains in several variables}
Before we concentrate on the complete solution of the two-dimensional situation, we shall present how the problem of determining pseudoconvex Reinhardt domains may be solved partially in higher dimensions.

\begin{remark}\label{remark-reinhardt} Following the characterization of hyperbolic pseudoconvex Reinhardt domains given in Theorem~\ref{theorem:hyperbolic-reinhardt}
when working with domains not biholomorphic to bounded ones, we may consider two cases:

\begin{itemize}
    \item $D$ is complete ($0\in D$) and in this case some coordinate axis $\mathbb C e_j\subset D$, which gives a retract by embedding $\mathbb C\owns\lambda\to \lambda e_j\in D$;
\item $0\not\in D$ and then the intersection $G_k$ of $D$ with some (possibly) lower dimensional $\mathbb C^k\times\{0\}^{n-k}$ (up to a permutation of variables)  is such  that the logarithmic image of $G_k$ contains a real line. 
\end{itemize}

Note that in the latter case the projection of $D$ on $\mathbb C^k\times\{0\}^{n-k}$ is $G_k$ so the retracts in $G_k$ are retracts in $D$. In other words: when $0\not\in D$ we are able to reduce the problem to the situation when $D\subset \mathbb C^n_*$ and $\log D$ contains a real line.
\end{remark}

It turns out that the existence of a rational real line in $\log D$, i.e.\ $a\in\log D$, $v\in \mathbb Z^n\setminus\{0\}$ (with the gcd of $v_j$, $j=1,\ldots,n$ equal to $1$) with $a+\mathbb Rv\subset\log D$ will make it possible to apply Theorem~\ref{theorem-punctured-plane}. Actually, without loss of generality, we may assume $a=(0,\ldots,0)$. Then we get the embedding of $\mathbb C_*$ in $D$ defined as follows:
\begin{equation}
i \colon \mathbb C_*\owns\lambda \mapsto (\lambda^{v_1},\ldots,\lambda^{v_n})\in D
\end{equation}
and the retraction mapping of $i(\mathbb C_*)$:
\begin{equation}
    r \colon D\owns z \mapsto i(z_1^{k_1}\ldots z_n^{k_n}) \in D,
\end{equation}
    where $k_j\in\mathbb Z$ and $k_1v_1+\ldots+k_nv_n=1$.

The above observation together with Remark~\ref{remark-reinhardt}, results of \cite{MR2763719}*{Theorem 3.3} and Theorem~\ref{theorem-punctured-plane} let us conclude the following result.

\begin{corollary}\label{corollary:higher-dimension} Let $D$ be a pseudoconvex Reinhardt domain in $\mathbb C^n$ such that $0\in D$ or the logarithmic image of some (possibly) lower dimensional intersection $D\cap V$, where $V$ is a $k$-dimensional space spanned by vectors of the canonical basis $e_{j_1},\ldots,e_{j_k}$, $1\leq k\leq n$, $1\leq j_1<\ldots<j_k\leq n$, contains a rational line. Let $\Phi\colon\enmo(D)\to\enmo(Y)$ be an isomorphism of semi-groups where $Y$ is a complex space. Then there exists a biholomorphic or anti-biholomorphic $\varphi \colon D\to Y$ such that $\Phi(f)=\varphi\circ f\circ \varphi^{-1}$.    
\end{corollary}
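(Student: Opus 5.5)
The plan is to split into the two cases of the hypothesis and reduce each to an existing determination theorem by exhibiting a suitable retract.

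\emph{Case $0\in D$.} By Remark~\ref{remark-reinhardt} we may assume some coordinate axis $\mathbb C e_j$ lies in $D$. The map $\lambda\mapsto\lambda e_j$ is then a proper holomorphic embedding of $\mathbb C$ into $D$. Indeed, $\mathbb C e_j = D\cap\bigcap_{l\neq j}V_l$ is closed in $D$. Moreover, the projection $z\mapsto z_je_j$ maps $D$ into $D$, because $D$ is complete Reinhardt when pseudoconvex and $0\in D$. This gives a holomorphic retraction onto $\mathbb C e_j$. Since $D$ is pseudoconvex, it is Stein, so \cite{MR2763719}*{Theorem 3.3} applies directly to a Stein manifold containing a properly embedded $\mathbb C$ that is a holomorphic retract. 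It yields the (anti-)biholomorphic $\varphi$ with $\Phi(f)=\varphi\circ f\circ\varphi^{-1}$.

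\emph{Case of a rational line in $\log(D\cap V)$.} After permuting coordinates, write $V=\mathbb C^k\times\{0\}^{n-k}$ and $G:=D\cap V$, viewed as a domain in $\mathbb C^k$. Since $\log G$ contains a rational line, $G\subset\mathbb C^k_*$ near that line. I would first check that the projection $\pi\colon D\to V$, $z\mapsto(z_1,\dots,z_k,0,\dots,0)$, maps $D$ into $G$. This uses the Reinhardt property together with pseudoconvexity: the hypothesis that $D\cap V$ is nonempty forces $D$ to be invariant under sending coordinates to $0$ in the relevant directions, as noted in Remark~\ref{remark-reinhardt}. Then apply the construction above to $G$. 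After translating in $\log G$ so that $a=0$, the map $i(\lambda)=(\lambda^{v_1},\dots,\lambda^{v_k},0,\dots,0)$ embeds $\mathbb C_*$ into $D$. The map $r(z)=i(z_1^{m_1}\cdots z_k^{m_k})$, with $\sum m_jv_j=1$, is a retraction of $G$ onto $i(\mathbb C_*)$, and $r\circ\pi$ is a holomorphic retraction of $D$ onto $i(\mathbb C_*)$. Here the $z_j^{m_j}$ are well-defined because points of $G$ have all of $z_1,\dots,z_k$ nonzero.

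Properness of $i$ has to be verified. The image is $\{z\in D: z_{k+1}=\dots=z_n=0,\ z^{w}=1 \text{ for } w\perp v \text{ integral}\}$ (after normalization) intersected appropriately, and this is a closed analytic subset of $D$ since $i(\mathbb C_*)$ is the fibre set of $r\circ\pi$. A holomorphic retraction is in particular a topological retraction. Since $D$ is Stein, Theorem~\ref{theorem-punctured-plane} gives the conclusion.

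The main obstacle I expect is the second case, specifically justifying that $\pi(D)\subset D$ (so that retracts of $G$ are retracts of $D$) and that the retraction onto $i(\mathbb C_*)$ is globally well-defined on $D$. If the general statement about projections is delicate, I would instead restrict to a chosen minimal face $V$ with $D\cap V\neq\emptyset$, where the closure structure of pseudoconvex Reinhardt domains gives the invariance. In both cases the determination itself is then read off from the cited theorems.
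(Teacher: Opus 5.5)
Your overall route is the paper's: a coordinate axis and \cite{MR2763719}*{Theorem 3.3} when $0\in D$, and the monomial embedding $i$ with the retraction $r\circ\pi$ and Theorem~\ref{theorem-punctured-plane} otherwise. Your claim $\pi(D)\subset D\cap V$ is fine, since pseudoconvex Reinhardt domains are relatively complete: if $D\cap V_j\neq\emptyset$, then $D$ is invariant under $z_j\mapsto tz_j$, $|t|\le 1$. The genuine gap is the sentence ``points of $G$ have all of $z_1,\dots,z_k$ nonzero''. The hypothesis does not imply it, yet both the definition of $r$ and the closedness of $i(\mathbb{C}_\ast)$ rest on it.

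Take $D=\{z\in\mathbb{C}^2: 1/2<|z_1|<2\}$ and $V=\mathbb{C}^2$. Then $0\notin D$, and $\log D=(-\log 2,\log 2)\times\mathbb{R}$ contains the rational line $\mathbb{R}(0,1)$. Your $i(\lambda)=(1,\lambda)$ has image $\{1\}\times\mathbb{C}_\ast$, whose closure in $D$ contains $(1,0)$. So $i$ is not proper and $i(\mathbb{C}_\ast)$ is not a retract. In fact this $D$ contains no properly embedded $\mathbb{C}_\ast$ at all. The first component of a holomorphic map $\mathbb{C}_\ast\to D$ is bounded, hence constant. A proper holomorphic map $\mathbb{C}_\ast\to\mathbb{C}$ is rational with poles at both $0$ and $\infty$, hence not injective. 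What $D$ offers instead is the retract $\{1\}\times\mathbb{C}$, which is handled by \cite{MR2763719}*{Theorem 3.3}. Your fallback to a minimal face does not settle this by itself: the given line must be transported to that face, and it may collapse to a point, as it does here. The paper's Remark~\ref{remark-reinhardt} passes over the same point with its reduction to $D\subset\mathbb{C}_\ast^n$.

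To repair it, let $J$ index $V$, put $J_0:=\{j\in J: D\cap V\cap V_j\neq\emptyset\}$, and set $W:=V\cap\bigcap_{j\in J_0}V_j$. By relative completeness, $D\cap W=\pi_W(D)$ lies in the torus $(\mathbb{C}_\ast)^{J\setminus J_0}$. Its logarithmic image contains the projection $a'+\mathbb{R}v'$ of the line, where $v'=v|_{J\setminus J_0}$.

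If $v'\neq 0$, divide it by the gcd of its entries and run your construction on $D\cap W$. Now $r$ is well defined and $i(\mathbb{C}_\ast)=\Fix(r\circ\pi_W)$ is closed.

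If $v'=0$, pick $j$ with $v_j>0$, replacing $v$ by $-v$ if needed. Move along $a+\lambda v$ with $\lambda\to+\infty$. Set to $0$ the coordinates with $v_l<0$, shrink those with $v_l>0$, $l\neq j$, back to $|z_l|=e^{a_l}$, and use completeness in $z_j$. All of this is allowed because these indices lie in $J_0$. You obtain an affine line $c+\mathbb{C}e_j\subset D$ with $c_j=0$. The map $z\mapsto c+z_je_j$ retracts $D$ onto it, so \cite{MR2763719}*{Theorem 3.3} applies.

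Finally, in your first case a coordinate axis lies in $D$ only if $D$ is not hyperbolic; the unit ball contains $0$ but no complex line. That proviso is built into Remark~\ref{remark-reinhardt} and should be stated.
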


By results of \cite{01721089}, for obtaining Theorem~\ref{theorem-reinhardt} we may restrict ourselves to the situation when $D_1$ and $D_2$ are not biholomorphic to bounded domains. Additionally, by Corollary~\ref{corollary:higher-dimension}, we may restrict ourselves to the case when $D\subset\mathbb C_*^2$ and $\log D$ contains only real lines of irrational type. In this situation we have two holomorphically inequivalent classes of domains to deal with. This follows from \cite{05208635}, where the complete description of holomorphic equivalence of unbounded pseudoconvex Reinhardt domains is given. These domains (defined later) are denoted $D_{\gamma}^*$ and $D_{\gamma,r}$ -- their logarithmic images will be either half-planes or strips. We introduce them in the next subsection (providing more, namely three types of domains). 

\subsection{Holomorphic endomorphisms of special irrational Reinhardt domains in $\mathbb C^2$}
By the earlier considerations to prove Theorem~\ref{theorem-reinhardt} we are reduced to examining  two classes of domains in $\mathbb C_*^2$ which we define below. The description of their endomorphisms plays a key role in the proof. 

The three special classes of pseudoconvex Reinhardt domains in $\mathbb C^2$ having an irrational line in the logarithmic image are defined below.

For $\gamma>0$, 
$\gamma\not\in\mathbb Q$ denote a domain of type (I) (or {\it complete half-plane type}) as follows
\begin{equation}
D_{\gamma}:=\{z\in\mathbb C^2:|z_1||z_2|^{\gamma}<1\}
\end{equation}
and the domain of type (II) (or {\it non-complete half-plane type}) is of the form: 
$D_{\gamma}^*:=D_{\gamma}\cap \mathbb C_*^2$. Additionally, for $r>1$ we define the domain of type (III) (or {\it strip type}) as
\begin{equation}
    D_{\gamma,r}:=\{z\in\mathbb C^2:1/r<|z_1||z_2|^{\gamma}< r\}.
\end{equation}
Recall that all the domains $D_{\gamma}$, $D_{\gamma}^*$ and $D_{\gamma,r}$ are pairwise (for different irrational $\gamma$ and $r>1$) biholomorphically inequivalent (see \cite{05208635}).

In our study, a special role is played by the mapping
\begin{equation}
    p_{\gamma}(\lambda):=(\exp(-\gamma\lambda),\exp(\lambda)),\;\lambda\in\mathbb C.
\end{equation}
We also denote $V(\gamma):=p_{\gamma}(\mathbb C)$ and $V_{\rho}(\gamma):=\{z\in\mathbb C^2:|z_1||z_2|^{\gamma}=\rho\}$, $\rho>0$. Note that $V(\gamma)$ is dense in $V_1(\gamma)$.

For $w,z\in\mathbb C^2$ we denote
\begin{equation}
    w\cdot z:=(w_1z_1,w_2z_2).
\end{equation}
For $z\in\mathbb C_*^2$, $n\in\mathbb Z$ we denote
\begin{equation}
    z^n:=(z_1^n,z_2^n).
\end{equation}

In $\mathbb C^2_*$ we introduce the equivalence relation:
\begin{equation}
    w\sim_{\gamma} z \iff \text{ there is a $\lambda\in\mathbb C$ such that } \left(\frac{w_1}{z_1},\frac{w_2}{z_2}\right)=p_{\gamma}(\lambda).
\end{equation}

For $a\in\mathbb C_*^2$ we have
\begin{equation}
[a]_{\sim_{\gamma}}=a\cdot p_{\gamma}(\mathbb C).
\end{equation}

\begin{theorem}\label{theorem:endomorphisms}
$\enmo(D_{\gamma,r})$ comprises the following mappings:

\begin{equation}
    D_{\gamma,r}\owns z \mapsto (z_1^{k_2}z_2^{k_1},z_1^{l_2}z_2^{k_1})\cdot a\cdot p_{\gamma}(h(z))\in D_{\gamma,r}
\end{equation}
where $h\in\mathcal O(D_{\gamma,r})$, $\alpha\gamma=k_1+\gamma l_1$, $\alpha=k_2+\gamma l_2$, $k_j,l_j\in\mathbb Z$, $\alpha\in[-1,1]$. In the case $\alpha=0$ (and then $k_j=l_j=0$) $a\in D_{\gamma,r}$. In the case $\alpha\neq 0$, $|a_||a_2|^{\gamma} r^{\pm \alpha}\in[1/r,r]$.

$\enmo(D_{\gamma}^*)$ comprises the following mappings:

\begin{equation}
    D_{\gamma}^*\owns z \mapsto (z_1^{k_2}z_2^{k_1},z_1^{l_2}z_2^{l_1})\cdot a\cdot p_{\gamma}(h(z))\in D_{\gamma}^*
\end{equation}
where $h\in\mathcal O(D_{\gamma}^*)$, $\alpha\gamma=k_1+\gamma l_1$, $\alpha=k_2+\gamma l_2$, $\alpha\geq 0$, $k_j,l_j\in\mathbb Z$. In the case $\alpha=0$, $a\in D_{\gamma}^*$. In the case $\alpha>0$,  $0<|a_1||a_2|^{\gamma}\leq 1$.

$\enmo(D_{\gamma})$ comprises the following mappings:
\begin{equation}
    D_{\gamma}\owns z\mapsto (z_1^{k_2}z_2^{k_1},z_1^{l_2}z_2^{l_1})\cdot a\cdot p_{\gamma}(h(z))\in D_{\gamma},
\end{equation}

where $h\in\mathcal O(D_{\gamma})$, $\alpha\gamma=k_1+\gamma l_1$, $\alpha=k_2+\gamma l_2$, $\alpha\geq 0$, $k_j,l_j\in\mathbb N$

and the mappings
\begin{equation}
    D_{\gamma}\owns z\mapsto (h(z),0)\in D_{\gamma},
\end{equation}

\begin{equation}
    D_{\gamma}\owns z\mapsto (0,h(z))\in D_{\gamma},
\end{equation}
where $h\in\mathcal O(D_{\gamma})$.

In the case $\alpha=0$, $a\in D_{\gamma}$. In the case $\alpha>0$,  $0<|a_1||a_2|^{\gamma}\leq 1$.

\end{theorem}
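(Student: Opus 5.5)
The plan is to lift endomorphisms to the universal cover through the logarithm and use the affine structure of the logarithmic image. Take $F\in\enmo(D)$ with $D=D_{\gamma,r}$ or $D_\gamma^*$, so $D\subset\mathbb C_*^2$. The topological degree data come first. The induced map on $\pi_1(D)\cong\ZZ^2$ is an integer matrix with rows $(k_2,k_1)$ and $(l_2,l_1)$, read off from the winding numbers of $F_1,F_2$ around loops in each coordinate. Hence there is a holomorphic $g=(g_1,g_2)\colon D\to\CC^2$ with
\[
F(z)=(z_1^{k_2}z_2^{k_1},z_1^{l_2}z_2^{l_1})\cdot(e^{g_1(z)},e^{g_2(z)}).
\]
Next consider the function $u(z):=\log|F_1(z)|+\gamma\log|F_2(z)|$. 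It is pluriharmonic on $D$ and takes values in $\log(1/r)$ to $\log r$ (respectively in $(-\infty,0)$). Write $t(z):=\log|z_1|+\gamma\log|z_2|$. On each leaf $a\cdot p_\gamma(\CC)$, $t$ is constant and the leaf is a copy of $\CC$, since $\gamma$ is irrational. Because $u\circ a\cdot p_\gamma$ is harmonic on $\CC$ and bounded from one side, Liouville makes it constant on each leaf. Since the leaves are dense in the level sets $V_\rho(\gamma)$, $u$ is a function of $t$ only. A pluriharmonic function of $t$ alone must be affine in $t$, so $u=\alpha t+\beta$.

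Comparing this with the monomial part gives $u=(k_2+\gamma l_2)\log|z_1|+(k_1+\gamma l_1)\log|z_2|+\mathrm{Re}(g_1+\gamma g_2)$. The terms $\log|z_1|,\log|z_2|$ are not pluriharmonic-equivalent modulo $\mathrm{Re}$ of holomorphic functions on $D$; here I use the fundamental group, or equivalently the periods of $d^c$. Therefore the coefficients must match up to $\alpha(\log|z_1|+\gamma\log|z_2|)$, which yields $\alpha=k_2+\gamma l_2$ and $\alpha\gamma=k_1+\gamma l_1$. It also shows that $\mathrm{Re}(g_1+\gamma g_2)$ is constant, so $g_1+\gamma g_2$ is constant.

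Writing $g_2=h+c_2$ and $g_1=-\gamma h+c_1$ gives the factor $a\cdot p_\gamma(h)$ with $a=(e^{c_1},e^{c_2})$. The range conditions on $\alpha$ and $a$ follow from requiring that $\alpha t+\log|a_1||a_2|^\gamma$ map the interval of $t$-values into itself. For the strip this forces $|\alpha|\le1$ together with the stated endpoint condition. For the half-line $(-\infty,0)$ it forces $\alpha\ge0$ and $|a_1||a_2|^\gamma\le1$. When $\alpha=0$, irrationality of $\gamma$ gives $k_j=l_j=0$, and $F$ is a map into a single leaf class.

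For $D_\gamma$ the domain contains the axes, so extra work is needed. If $F(D_\gamma)\subset\CC^2_*$, restrict to $D_\gamma^*$ and apply the previous case. Holomorphic extension across the axes then forces the exponents to be nonnegative, with $h$ extending; I would verify this by checking boundedness near the axes. Otherwise $F_1$ or $F_2$ vanishes somewhere. If $F_1\not\equiv0$ and $F_2\not\equiv0$, one must rule out zeros via the constraint $|F_1||F_2|^\gamma<1$ combined with the Liouville argument on the leaves. This yields either $F(D)\subset\CC_*^2$ or one coordinate identically zero, which gives the maps $(h,0)$ and $(0,h)$; note that each axis lies in $D_\gamma$.

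I expect the main obstacle to be the step showing that $u$ is affine in $t$, that is, the leafwise Liouville argument combined with density and the identification of coefficients. The handling of zeros in the $D_\gamma$ case is likely the most delicate bookkeeping.
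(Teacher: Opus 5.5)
Your argument for $D_{\gamma,r}$ and $D_\gamma^*$ is correct, and its core step is the paper's: leafwise Liouville on $a\cdot p_\gamma(\CC)$, density of the leaves in $V_\rho(\gamma)$, and then affine dependence on $t$. The paper gets the affine dependence from a radial harmonic function on the diagonal; your $\phi''=0$ computation is equivalent.

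Where you differ is the remaining steps. The paper first extracts $\alpha,\alpha\gamma\in\ZZ+\gamma\ZZ$ from the winding numbers of $\lambda\mapsto F_j(1,\lambda)$ and $\lambda\mapsto F_j(\lambda,1)$. It then quotes Lemma~9 of \cite{05208635} for the factorization. You instead factor off the monomial at the outset by lifting through $\pi_1(D)\cong\ZZ^2$. This is legitimate because $D$ is homeomorphic to $\log D\times T^2$ with $\log D$ convex. A single period argument for $d^c$ then gives both the integrality relations and the constancy of $g_1+\gamma g_2$. This is the same winding-number information, but it makes the proof self-contained: it reproves the needed case of the cited lemma and identifies $k_j,l_j$ directly as topological degrees.

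The $D_\gamma$ case, however, fails as written. You claim that if $F_1,F_2\not\equiv0$ then zeros can be ruled out, giving $F(D_\gamma)\subset\CC_*^2$. This is false: the identity and $z\mapsto(z_1^2,z_2^2)$ are endomorphisms with both components nonzero, yet they vanish on the axes. Moreover, your first branch $F(D_\gamma)\subset\CC_*^2$ produces only degree-zero maps. A nonzero monomial exponent is incompatible with $F$ being holomorphic and zero-free on the axes, so your split would lose every endomorphism of positive degree.

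The leaves $a\cdot p_\gamma(\CC)$ lie in $D_\gamma^*$, so the Liouville argument only controls zeros there. The function $v=\log|F_1|+\gamma\log|F_2|$ is plurisubharmonic on $D_\gamma$, possibly taking the value $-\infty$. It is subharmonic and bounded above on each leaf, hence constant (possibly $-\infty$) on each $V_\rho(\gamma)$ with $0<\rho<1$. So if $F_1F_2$ vanishes at one point of $D_\gamma^*$, it vanishes on a whole real hypersurface $V_\rho(\gamma)$. Then $F_1F_2\equiv0$, which gives $(h,0)$ or $(0,h)$. Otherwise $F(D_\gamma^*)\subset D_\gamma^*$, and you apply the $D_\gamma^*$ classification to $F|_{D_\gamma^*}$. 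This is exactly the paper's dichotomy.

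Zeros on the axes are allowed, and they are precisely where the nonnegative exponents come from. Rather than checking boundedness, use the divisor. Since $F_1$ is zero-free off the axes, its divisor is $m\{z_1=0\}+n\{z_2=0\}$ with $m,n\ge0$. Hence $F_1=z_1^mz_2^nu$ with $u$ zero-free on the star-shaped, hence simply connected, domain $D_\gamma$. Comparing winding numbers on $D_\gamma^*$ gives $m=k_2$ and $n=k_1$. The same works for $F_2$, and then $h$ extends to $D_\gamma$.
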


In the situation described in the theorem above, the only $\alpha$ such that $\alpha\gamma=k_1+\gamma l_1$, $\alpha=k_2+\gamma l_2$ is called {\it the degree} of the endomorphism. In the case of a strip type domain, the degree $\alpha\in[-1,1]$. In the case of domains of complete (and non-complete) half-plane types, the degree $\alpha\geq 0$. 

   Direct calculations also show that for the endomorphism $F$ of degree $\alpha$ we get 
   %(in the case $\alpha=0$ we put $b$ in the place of $a$)
    \begin{equation}
        |F_1(z)||F_2(z)|^{\gamma}=|a_1||a_2|^{\gamma}(|z_1||z_2|^{\gamma})^{\alpha}
    \end{equation}
    for all $z\in D_{\gamma,r}$ (respectively, $z\in D_{\gamma}^*$).

Consequently, we see that for the given endomorphism of degree $\alpha$ we find a positive $\theta$ such that any set $V_{\rho}(\gamma)$ is mapped by the mapping to $V_{|a_1||a_2|^{\gamma} \rho^{\alpha}}(\gamma)$ %(with $a$ replaced by $b$ for $\alpha=0$). 
In the case of order $\alpha=0$ the image under the endomorphism is contained in $a\cdot p_{\gamma}(\mathbb C)$.
%for some $b\in D_{\gamma,r}$ (respectively, $D_{\gamma}^*$). 
In the case $\alpha\neq 0$ the image of the endomorphism is not contained in $V_{\rho}(\gamma)$ for any $\rho$.

Recall that the description of biholomorphic equivalence of domains studied above was discussed in \cite{MR1144475}, \cite{MR1189968}. The description of proper holomorphic mappings between the domains as above may be found in \cite{05208635}. The method of the proof of Theorem~\ref{theorem:endomorphisms} follows the ideas developed in \cite{05208635}.

\begin{remark} It follows from \cite{05208635} that the holomorphic automorphisms of $D_{\gamma,r}$ are precisely the endomorphisms
  \begin{equation}
      D_{\gamma,r}\owns z \mapsto a\cdot z^{\pm 1}\in D_{\gamma,r},\;|a_1||a_2|^{\gamma}=1.
  \end{equation}
  In particular, the involutive automorphisms are $z\to a\cdot z^{-1}$, $|a_1|a_2|^{\gamma}=1$ and the mappings $z\to (\pm z_1,\pm z_2)$.

  On the other hand, using \cite{05208635} once more (and the characterization of algebraic automorphisms of $\mathbb C_*^n$ from \cite{MR1678013}), we get that automorphisms of $D_{\gamma}^*$ are
  \begin{equation}
      D_{\gamma}^*\owns z \mapsto a\cdot (z_1^{k_2}z_2^{k_1},z_1^{l_2}z_2^{l_1})\in D_{\gamma}^*,\; |a_1||a_2|^{\gamma}=1,
  \end{equation}
  where $\alpha\gamma=k_1+\gamma l_1$, $\alpha=k_2+\gamma l_2$, $\alpha>0$ and $|k_2l_1-k_1l_2|=1$.

In particular, the involutive holomorphic automorphisms of $D_{\gamma}^*$ are only $D_{\gamma}^*\owns z\to (\pm z_1,\pm z_2)\in D_{\gamma}^*$.
\end{remark}

\begin{proof}[Proof of Theorem~\ref{theorem:endomorphisms}]

Let $F$ be an endomorphism of one of domains as in the theorem.

First note that (in all three cases) for any $\rho\in(1/r,r)$ (respectively, $\rho\in (0,1)$) the function
\begin{equation}
    \mathbb C\owns\lambda \mapsto \log(|F_1(\rho p_{\gamma}(\lambda))||F_2(\rho p_{\gamma}(\lambda))|^{\gamma})
\end{equation}
is (sub)harmonic and bounded from above, so it is constant. As the set $(\rho,\rho)\cdot p_{\gamma}(\mathbb C)$ is dense in $V_{\rho^{1+\gamma}}(\gamma)$, the continuity of the mapping above implies the following
\begin{equation}
|F_1(z)||F_2(z)|^{\gamma} \text{ is constant on $V_{\rho^{1+\gamma}}(\gamma)$}.
\end{equation}
If for some $\rho$ the above constant would be zero (which is only possible in the case of $D_{\gamma}$) then, by the identity principle $F_1$ or $F_2$ would be identically $0$. Then there would be two possibilities for the endomorphisms:
$(0,h(z))$, $z\in D_{\gamma}$ or $(h(z),0)$, $z\in D_{\gamma}$, where $h\in\mathcal O(D_{\gamma})$. This means that we may assume that for all $z\in D_{\gamma}^*$ we get
%(respectively, $z\in D_{\gamma,r}$) 
$F(z)\in D_{\gamma}^*$.
%(respectively, $F(z)\in D_{\gamma,r}$). 

Therefore, we may now restrict our further considerations to the case of $D_{\gamma}^*$ (respectively, $D_{\gamma,r}$).

Consequently, the function (defined for $0<|\lambda|<1$ or $1/r<|\lambda|^{1+\gamma}<r$ with the values lying in $(-\infty,0)$ or $(-\log r,\log r)$)
\begin{equation}
    f \colon \lambda \mapsto \log(|F_1(\lambda,\lambda)||F_2(\lambda,\lambda)|^{\gamma})
\end{equation}
is harmonic and radial (i.e.\ $f(\lambda)=f(|\lambda|)$). 

Therefore, it is of the form $c(1+\gamma)\log|\lambda|+d$. Then
\begin{equation}    |F_1(z)||F_2(z)|^{\gamma}=\beta(|z_1||z_2|^{\gamma})^{\alpha},
\end{equation}
where $z$ is from $D_{\gamma}^*$ (respectively, $D_{\gamma,r}$).

As $F(D_{\gamma}^*)\subset D_{\gamma}^*$ (respectively, $F(D_{\gamma,r})\subset D_{\gamma,r}$) we get $\alpha>0$ and $0<\beta\leq 1$ or $\alpha=0$ and $0<\beta<1$ (respectively, $\alpha\neq 0$ and $\beta r^{\pm\alpha}\in[1/r,r]$ or $\alpha=0$ and $\beta\in(r,1/r)$).

Taking the special points $(1,\lambda)\in D_{\gamma}^*$ or $D_{\gamma,r}$, we get
\begin{equation}
    |f_1(\lambda)|^2|f_2(\lambda)|^{2\gamma}:=|F_1(1,\lambda)|^2|F_2(1,\lambda)|^{2\gamma}=\beta^2 |\lambda|^{2\gamma\alpha}
\end{equation}
for $0<|\lambda|<1$ (or $1/r<|\lambda|^{\gamma}<r$).

Taking the logarithm and differentiating w.r.t.\  $\lambda$ we get
\begin{equation}
\frac{f_1^{\prime}(\lambda)}{f_1(\lambda)}+\gamma \frac{f_2^{\prime}(\lambda)}{f_2(\lambda)}=\gamma\alpha\frac{1}{\lambda}
\end{equation}
for $0<|\lambda|<1$ (or $1/r<|\lambda|^{\gamma}<r$).

Taking the integral over the circle $|\lambda|=\rho$ (where $C_{\rho}$ denotes the parametrization of the circle) and keeping in mind that the indices of respective curves ($f_j\circ C_{\rho}$) are integers, we get the following property
\begin{equation}
    \alpha\gamma\in\mathbb Z+\gamma\mathbb Z.
\end{equation}
Repeating the same reasoning with $f_j(\lambda):=F_j(\lambda,1)$ we get the property
\begin{equation}
    \alpha\in\mathbb Z+\gamma\mathbb Z.
\end{equation}
In other words, we need to have
\begin{align}
  \alpha\gamma&=k_1+\gamma l_1,\\
  \alpha&=k_2+\gamma l_2
\end{align}
for some $k_1,l_1,k_2,l_2\in\mathbb Z$.

Then we get for $z\in D_{\gamma}^*$ (respectively, for $z\in D_{\gamma,r}$)
\begin{equation}
 \left|\frac{F_1(z)}{z_1^{k_2}z_2^{k_1}}\right|=\beta\left|\frac{z_1^{l_2}z_2^{l_1}}{F_2(z)}\right|^{\gamma}.
\end{equation}

We may use Lemma 9 in \cite{05208635} to get that 
\begin{equation}
    F(z)=(\beta_1 z_1^{k_2}z_2^{k_1}\exp(-\gamma h(z)),\beta_2 z_1^{l_2}z_2^{l_1}\exp(h(z)),
    \end{equation}
    where $h\in\mathcal O(D_{\gamma}^*)$ (respectively,  $h\in\mathcal O(D_{\gamma,r})$) with $\alpha$ and $\beta:=|\beta_1||\beta_2|^{\gamma}$ satisfying the earlier relations which gives the desired formulas.
    Direct calculations show that all the functions appearing in the statement of the theorem are actually endomorphisms.   
  %  , $0<|\beta_1||\beta_2|^{\gamma}\leq 1$, $\alpha\geq 0$ (respectively, $1/r\leq |\beta_1||\beta_2|^{\gamma}\leq r$, $\alpha\in\{-1,0,1\}$) and it gives a complete description of $\operatorname{End}(D_{\gamma}^*)$ (and that of $\operatorname{End}(D_{\gamma})$. Note that in the case $\alpha=0$ we need additionally to assume that $|\beta_1||\beta_2|^{\gamma}<1$ (respectively, $1/r<|\beta_1||\beta_2|^{\gamma}<r$).
\end{proof}

\begin{remark} If we assume additionally that $\gamma$ is not algebraic (even more generally, it is not a solution of the second degree equation with integer coefficients) then $k_2=\alpha$, $k_1=0$, $l_2=0$, $l_1=\alpha$.

Note that in the case $\alpha\in\mathbb Z$ we get that $\alpha=k_2=l_1$ and $k_1=l_2=0$. 
%Note also that in the case of the domain $D_{\gamma,r}$ we have $\alpha\in\{-1,0,1\}$.
\end{remark}

As already mentioned, the complete description of holomorphic automorphisms or proper holomorphic self-mappings of domains of type (I), (II) and (III) was found in \cite{05208635}. The proper holomorphic self-mappings of the above domains are always of some order $\alpha \neq 0$ with $h$ being constant. Note that there are no non-trivial (i.e.\ not automorphisms) proper holomorphic self-mappings of domains of strip type. On the other hand, non-trivial proper holomorphic self-mappings of domains of complete and non-complete half plane type do exist (for instance the mappings of the form $z^n$, $n=2,3,\ldots$) and they all have the property that the preimage of an arbitrary element of the domain contains the same number of elements (equal to the multiplicity of the proper mapping).

\section{Properties of endomorphisms of Reinhardt domains}
First we make a remark on involutive automorphisms.

\begin{remark} If $\psi \colon X\to Y$ is an iso-conjugating mapping then the involutive automorphisms are mapped by the conjugation w.r.t.\  $\psi$ to involutive automorphisms. 

%It follows from the description of holomorphic automorphisms of %$D_{\gamma,r}$ 
Recall that the involutive automorphisms of $D_{\gamma,r}$ are mappings $a\cdot z^{-1}$, $|a_1||a_2|^{\gamma}=1$ and three automorphisms of the form $(\pm z_1,\pm z_2)$.

On the other hand, involutive automorphisms of $D_{\gamma}^*$ comprise only the mappings $(\pm z_1,\pm z_2)$.

Consequently, by comparing the number of involutive automorphisms we then conclude that the domains $D_{\gamma}^*$ are not iso-conjugated with domains $D_{\gamma,r}$ or, equivalently, iso-conjugation leaves the type of domains invariant.
\end{remark}

%We make then a remark on a possible iso-conjugacy of Reinhardt domains of different types (that of $D_{\gamma}$ and $D_{\gamma,r}$). First note that the type of such a mapping cannot be distinct. Actually, this follows from the fact that Reinhardt domains $D_{\gamma}^*$ have non-proper proper holomorphic mappings (having the same number in any preimage) whereas the domains $D_{\gamma,r}$ do not have this property. Alternatively, that could be seen by comparing (the number) of involutive automorphisms).
Below, we shall use the fact that one-dimensional analytic sets cannot lie in $V_{\rho}(\gamma)$ for fixed $\gamma$. This fact may be well-known to specialists in complex analytic geometry, but since we could not find a reference, we present its short proof below.

\begin{lemma}\label{lemma:one-dimensional-analytic-set}
    There is no one-dimensional analytic set $M$ in the pseudoconvex domain $D\subset\mathbb C^2$ with $V_{\rho}(\gamma)$ such that $M\subset V_{\rho}(\gamma)\subset D$.
\end{lemma}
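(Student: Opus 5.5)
The plan is to argue by contradiction and show that a one-dimensional analytic set inside $V_{\rho}(\gamma)$ would be forced to contain all of $V_{\rho}(\gamma)$. That is impossible by dimension count, since $V_{\rho}(\gamma)$ is a real $3$-dimensional submanifold of $\mathbb C_*^2$. Throughout, $\gamma>0$ is irrational, as in the setting of the domains $D_{\gamma}$, $D_{\gamma}^*$, $D_{\gamma,r}$.

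\emph{Local step.} Suppose $M\subset V_{\rho}(\gamma)\subset D$ is a one-dimensional analytic set. Replacing $M$ by one of its irreducible components, we may assume $M$ is irreducible. Pick a regular point $z^0\in M$ and a local holomorphic parametrization $\phi=(f_1,f_2)\colon U\to M$, $U\subset\mathbb C$ a disc around $0$, with $\phi$ nonconstant and $\phi(0)=z^0$. Since $\rho>0$, both $f_j$ are nonvanishing. Shrinking $U$, we choose holomorphic logarithms $g_j=\log f_j$. The inclusion $M\subset V_{\rho}(\gamma)$ gives
\[
\operatorname{Re}\bigl(g_1+\gamma g_2\bigr)\equiv\log\rho \quad\text{on } U,
\]
so the holomorphic function $g_1+\gamma g_2$ is constant, because its real part is constant. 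With $h:=g_2-g_2(0)$ this yields
\[
\phi(t)=z^0\cdot p_{\gamma}(h(t)),\quad t\in U.
\]
Here $h$ is nonconstant (otherwise $\phi$ would be constant), so $h(U)$ contains an open set $W\subset\mathbb C$. Hence $z^0\cdot p_{\gamma}(W)\subset M$.

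\emph{Propagation along the leaf.} Consider the entire map $\lambda\mapsto z^0\cdot p_{\gamma}(\lambda)$. Its image lies in $z^0\cdot V(\gamma)\subset V_{\rho}(\gamma)\subset D$, so it is a holomorphic map $\mathbb C\to D$. The preimage $E$ of the closed analytic set $M$ under this map is an analytic subset of $\mathbb C$: locally it is the common zero set of the pulled-back defining functions of $M$. Since $E$ contains the open set $W$ and $\mathbb C$ is connected, the identity theorem gives $E=\mathbb C$. Thus the whole leaf satisfies $z^0\cdot p_{\gamma}(\mathbb C)\subset M$.

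\emph{Density and dimension.} Because $\gamma$ is irrational, $V(\gamma)=p_{\gamma}(\mathbb C)$ is dense in $V_1(\gamma)$, so $z^0\cdot p_{\gamma}(\mathbb C)$ is dense in $z^0\cdot V_1(\gamma)=V_{\rho}(\gamma)$. Since $M$ is closed in $D$ and $V_{\rho}(\gamma)\subset D$, we get $V_{\rho}(\gamma)\subset M$. But $M$ is a countable union of points and of $2$-dimensional real submanifolds (its regular locus), so $M$ has Hausdorff dimension $2$. On the other hand, $V_{\rho}(\gamma)$ is a smooth real hypersurface in $\mathbb C_*^2$ of dimension $3$. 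This is the contradiction.

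The main point needing care is the propagation step. A single leaf $z^0\cdot p_{\gamma}(\mathbb C)$ is only an immersed, non-closed curve, so one must use the closedness of $M$ together with the identity principle along the entire parametrization. Only then does the density of the irrational leaf turn "contains a piece of a leaf" into "contains all of $V_{\rho}(\gamma)$."
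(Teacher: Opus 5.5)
Your proof is correct and follows the same route as the paper. You use the local leaf form $z^0\cdot p_{\gamma}(h(t))$ at a regular point, then the identity principle along the entire leaf $z^0\cdot p_{\gamma}(\mathbb C)$, then density of the irrational leaf in $V_{\rho}(\gamma)$ against the dimension of $M$; the only differences are minor: you derive the local form directly instead of citing Lemma 9 of the reference, and you work with local defining functions of $M$, so the pseudoconvexity of $D$ used in the paper is not actually needed.
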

\begin{proof}
    Suppose the contrary. By the pseudoconvexity of $D$, there are holomorphic functions $f_j \colon D\to\mathbb C$, $j=1,\ldots,k$ such that $M=\bigcap\limits_{j=1}^k f_j^{-1}(0)$. Additionally, by applying Lemma 9 from \cite{05208635} at regular points $z_0$ of $M$, we get that $M$ is given near $z_0$ as the image of a mapping $a\cdot p_{\gamma}(\lambda)$ for $\lambda$ from some open neighborhood $U$ of $\lambda_0\in\mathbb C$. As the functions $f_j\circ(a\cdot p_{\gamma}):\mathbb C\to\mathbb C$ are holomorphic and vanish on $U$ they all vanish on $\mathbb C$. Consequently, $a\cdot p_{\gamma}(\mathbb C)\subset M$. But the closure of $a\cdot p_{\gamma}(\mathbb C)$ is $V_{|a_1||a_2|^{\gamma}}(\gamma)$, so $V_{|a_1||a_2|^{\gamma}}(\gamma)\subset M$ -- a contradiction.   
\end{proof}

\begin{remark}\label{remark:fixed-points-analytic-set} Looking at the form of holomorphic endomorphisms of $D_{\gamma,r}$ and $D_{\gamma}^*$ and making use of Lemma~\ref{lemma:one-dimensional-analytic-set} we see then that the only situation when the set of fixed points of such an endomorphism $G$ is a one-dimensional analytic set, is necessarily the situation when $\alpha=1$. And then the mapping is of the form
\begin{equation}
G(z):=z\cdot p_{\gamma}(h(z)),\; z\in D_{\gamma,r}
\end{equation}
for some $h\in\mathcal O(D_{\gamma,r})$, $h\not\equiv 0$ but with $h^{-1}(0)\neq \emptyset$. Then, we have $\Fix (G)=h^{-1}(0)$. 

Consequently, we get the property that the endomorphism $G$ of degree $1$ defined as above has the set of fixed points being either an analytic set of dimension one (if $h^{-1}(0)\neq\emptyset$, $h\not\equiv 0$), is empty (if $h^{-1}(0)=\emptyset$) or is $D$ (if $h\equiv 0$). 
\end{remark}

Keeping in mind the above observations, we get the following crucial properties of conjugated endomorphisms.

\begin{remark} Let $\psi\colon D_{\gamma_1,r_1}\to D_{\gamma_2,r_2}$ be an iso-conjugating mapping. Let $G$ be an endomorphism of $D_{\gamma_2,r_2}$ of the form:
\begin{equation}
    G(z):= z\cdot p_{\gamma_2}(h(z)),\; z\in D_{\gamma_2,r_2}
\end{equation}
and $h\in\mathcal O(D_{\gamma_2,r_2})$, $h\not\equiv 0$, $h^{-1}(0)\neq\emptyset$.

By Lemma~\ref{lemma:fixed-points} and Remark~\ref{remark:fixed-points-analytic-set} we get that the conjugated endomorphism $F\in\enmo(D_{\gamma_1,r_1})$ has the analogous form
\begin{equation}
F(z):=z\cdot p_{\gamma}(\tilde h(z)),\; z\in D_{\gamma,r},
\end{equation}
where $\tilde h\in\mathcal O(D_{\gamma_1,r_1})$, $\tilde h\not\equiv 0$ and $\Fix (F)=\tilde h^{-1}(0)$. 

Recall the conjugating formula
\begin{equation}
    G\circ\psi=\psi\circ F.
\end{equation}

Note that Lemma~\ref{lemma:fixed-points} gives that  $z\in \Fix(F)$ iff $\psi(z)\in\Fix(G)$ or $\tilde h(z)=0$ iff $h(\psi(z))=0$. 
Exactly the same property holds for the iso-conjugating mapping  $\psi \colon D_{\gamma_1}^*\to D_{\gamma_2}^*$.
\end{remark}

Below, we show the next property of the iso-conjugating mapping of $D_{\gamma,r}$'s and $D_{\gamma}^*$'s.

\begin{lemma}\label{lemma-equivalence-classes} For the iso-conjugating mapping $\psi \colon D_{\gamma_1,r_1}\to D_{\gamma_2,r_2}$ (respectively, $\psi \colon D_{\gamma_1}^*\to D_{\gamma_2}^*$) we have
$\psi([z]_{\sim_{\gamma_1}})=[\psi(z)]_{\sim_{\gamma_2}}$, $z\in D_{\gamma_1,r_1}$ (respectively, $z\in D_{\gamma_1}^*$). Moreover, $\psi$ maps bijectively $[z]_{\sim_{\gamma_1}}=z\cdot p_{\gamma_1}(\mathbb C)$ onto $[z]_{\sim_{\gamma_2}}=\psi(z)\cdot p_{\gamma_2}(\mathbb C)$, $z\in D_{\gamma_1,r_1}$ (respectively, $z\in D_{\gamma_1}^*$).
\end{lemma}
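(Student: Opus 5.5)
We want to show that $\psi$ maps the leaf $z\cdot p_{\gamma_1}(\mathbb C)$ bijectively onto the leaf $\psi(z)\cdot p_{\gamma_2}(\mathbb C)$. The plan is to describe these leaves as the orbit under the degree-zero endomorphisms that fix a given point, and then transport that description through $\psi$.

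\textbf{Step 1: a semigroup-theoretic description of the leaf.} Fix $z$ in the domain (we treat $D_{\gamma,r}$; the case $D_\gamma^*$ is identical). By Theorem~\ref{theorem:endomorphisms}, the endomorphisms of degree $0$ are exactly the maps $w\mapsto a\cdot p_\gamma(h(w))$. Each of them has image inside a single class $a\cdot p_\gamma(\mathbb C)$.

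Conversely, every point $z\cdot p_\gamma(\lambda)$ is the image of some such map, for instance the constant map. To get a characterization that $\psi$ can read off, we use the degree-one maps
\[
G_h(w)=w\cdot p_\gamma(h(w)).
\]
For any $\lambda$, take $h\equiv\lambda$. Then $G_h$ is an automorphism sending $z$ to $z\cdot p_\gamma(\lambda)$. So the class of $z$ is contained in the orbit of $z$ under the set $\mathcal G$ of endomorphisms $G$ of the form $w\mapsto w\cdot p_\gamma(h(w))$. Moreover, every such $G$ preserves each class, so the orbit of $z$ under $\mathcal G$ equals $[z]_{\sim_\gamma}$.

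\textbf{Step 2: invariance of $\mathcal G$ under conjugation.} This is the main step: we must show that conjugation by $\psi$ maps $\mathcal G_1$ onto $\mathcal G_2$. It suffices to check that membership in $\mathcal G$ is expressible through fixed-point sets and semigroup identities, since Lemma~\ref{lemma:fixed-points} and the remark after Remark~\ref{remark:fixed-points-analytic-set} make those invariant.

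We characterize $\mathcal G$ as follows. An endomorphism $G$ lies in $\mathcal G$ if and only if there is an endomorphism $G'$ whose fixed-point set is a one-dimensional analytic set, and such that $G$ agrees with $G'$ up to an element of $\mathcal G$. A more robust route is preferable, though: $\mathcal G$ is generated, as a semigroup, by those of its elements that have a nonempty fixed-point set of codimension one. This holds because $h = h_1 + h_2$ with each $h_i$ having zeros and $h_i\not\equiv0$, and because
\[
G_{h_1}\circ G_{h_2}(w)=w\cdot p_\gamma\bigl(h_2(w)+h_1(w\cdot p_\gamma(h_2(w)))\bigr).
\]
Adjusting the choice appropriately, for example taking $h_1$ pulled back by $G_{h_2}^{-1}$ when $h_2$ is constant, handles the general case.

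By Remark~\ref{remark:fixed-points-analytic-set}, the fixed-point-set condition characterizes exactly these generators among all endomorphisms. Using Lemma~\ref{lemma:fixed-points}, we conclude that $\psi\mathcal G_1\psi^{-1}=\mathcal G_2$. The delicate points are nonemptiness and the decomposition of a general $h$, including constant $h$. The constants are handled directly: $G_c$ has $\Fix = \emptyset$ but equals $G_{h}\circ G_{c-h}$ for suitable nonconstant $h$.

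\textbf{Step 3: conclusion.} From Step 2 we get
\[
\psi([z]_{\sim_{\gamma_1}})=\psi(\mathcal G_1 z)=\psi\mathcal G_1\psi^{-1}(\psi(z))=\mathcal G_2\,\psi(z)=[\psi(z)]_{\sim_{\gamma_2}}.
\]
Since $\psi$ is a bijection of the domains and the classes partition them, $\psi$ restricted to a class is injective. The displayed equality then shows that this restriction is a bijection onto the corresponding class.
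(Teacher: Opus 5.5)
\textbf{Your Step~2 rests on a false claim.} The semigroup generated by the maps $G_h$ with $h\not\equiv 0$ and $h^{-1}(0)\neq\emptyset$ does not contain $G_c$ for any constant $c\neq 0$. In particular, $G_c\neq G_h\circ G_{c-h}$ for every nonconstant $h$. Your general decomposition does not work either: the ``pull back by $G_{h_2}^{-1}$'' trick needs $h_2$ constant, and a constant $h_2$ is not a generator.

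The reason is that every generator is non-injective. $G_h$ preserves each leaf $a\cdot p_{\gamma}(\mathbb C)$ and acts on it, in the coordinate $\lambda$, by $\lambda\mapsto\lambda+\chi_a(\lambda)$, where $\chi_a(\lambda)=h(a\cdot p_{\gamma}(\lambda))$. Suppose $G_h$ were injective.
\begin{itemize}
\item Each of these injective entire maps would then be affine.
\item The set $a\cdot p_{\gamma}(i\mathbb R)$ lies in the compact torus $\{|w_1|=|a_1|,\ |w_2|=|a_2|\}\subset D$, on which $h$ is bounded, so $\chi_a$ would be constant.
\item Then $h$ would be constant on the dense subset $a\cdot p_{\gamma}(\mathbb C)$ of the real hypersurface $V_{|a_1||a_2|^{\gamma}}(\gamma)$, hence constant on $D$.
\end{itemize}
This contradicts $h$ being nonconstant. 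A composition whose rightmost factor is non-injective is non-injective, whereas $G_c$ is an automorphism. So your argument never establishes $\psi\mathcal G_1\psi^{-1}=\mathcal G_2$ for the full family including constant $h$. The paper obtains that invariance only afterwards, as a consequence of this very lemma.

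\textbf{The detour through compositions is unnecessary.} The generators by themselves already move $z$ to every point of its class. Given $\lambda\in\mathbb C$, take a nonconstant $h\in\holo(D_1)$ with $h(z)=\lambda$ and $h^{-1}(0)\neq\emptyset$. For example, take $h(w)=\lambda+\kappa(w_2-z_2)$ with $\kappa\neq 0$ chosen so that $z_2\neq\lambda/\kappa$. Then $\Fix(G_h)=h^{-1}(0)$ is one-dimensional. By Lemma~\ref{lemma:fixed-points} and Remark~\ref{remark:fixed-points-analytic-set}, applied to $\psi^{-1}$, the conjugate of $G_h$ has the form $w\mapsto w\cdot p_{\gamma_2}(\tilde h(w))$. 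Therefore $\psi(z\cdot p_{\gamma_1}(\lambda))=\psi(G_h(z))=\psi(z)\cdot p_{\gamma_2}(\tilde h(\psi(z)))\in[\psi(z)]_{\sim_{\gamma_2}}$. The reverse inclusion follows by running the same argument with $\psi^{-1}$.

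This is exactly the paper's proof, and your Step~3 then yields the bijectivity statement. One further point: to transport the fixed-point condition through the bare bijection $\psi$, note that fixed-point sets of endomorphisms are analytic. Hence being one-dimensional is equivalent to being uncountable and proper, a property $\psi$ preserves. The paper leaves this implicit too.
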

\begin{proof} The proof in both cases is identical. We consider then the case of $D_{\gamma,r}^*$'s. It is sufficient to show that if $z\sim_{\gamma_1} \widetilde{z}$ then $\psi(z)\sim_{\gamma_2}\psi(\widetilde{z})$. Let then $\widetilde{z}=p_{\gamma_1}(\lambda)\cdot z$. Choose $h\in\mathcal O(D_{\gamma_1,r_1})$ with $h\not\equiv 0$, $h(z)=\lambda$, $h^{-1}(0)\neq\emptyset$. We find then $\widetilde{h}\in\mathcal O(D_{\gamma_2,r_2})$ such that 
\begin{equation}
    \psi(\widetilde{z})=\psi(z\cdot p_{\gamma_1}(h(z)))=\psi(z)\cdot p_{\gamma_2}(\widetilde{h}(\psi(z))\in [\psi(z)]_{\sim_{\gamma_2}}. \qedhere
\end{equation}
\end{proof}

%Recall that the endomorphisms of both: $D_{\gamma,r}$'s and $D_{\gamma}^*$'s have images lying in one equivalence class (or more generally $V_{\theta}(\rho)$ for one $\theta$) exactly when their degrees $\alpha=0$. Consequently,
%the iso-conjugation leaves the degree zero endomorphisms invariant.

%\begin{lemma} The iso-conjugation $\psi:D_{\gamma_1,r_1}\to D_{\gamma_2,r_2}$ (respectively, $\psi:D_{\gamma_1}^*\to D_{\gamma_2}^*$) transforms any endomorphism of $D_{\gamma_1,r_1}$ (respectively, $D_{\gamma_1}^*$) of degree zero to an endomorphism of degree zero. Therefore, for any $b\in D_{\gamma_1,r_1}$ (respectively, $b\in D_{\gamma_1}^*$) and any $h\in\mathcal O(D_{\gamma_1,r_1})$ (respectively,  $h\in\mathcal O(D_{\gamma_1,r_1})$) there is a $g\in\mathcal O(D_{\gamma_2,r_2})$ (respectively, $g\in\mathcal O(D_{\gamma_2}^*)$) such that
%\begin{equation}
%\psi(b\cdot p_{\gamma_1}(h(z)))=\psi(b)\cdot p_{\gamma_2}(g(\psi(z))
%\end{equation}
%for $z\in D_{\gamma_1,r_1}$ (respectively, $z\in D_{\gamma_1}^*$).
%\end{lemma}

From now on we denote by $D_j$ either the domains $D_{\gamma_j,r_j}$ or $D_{\gamma_j}^*$ with the iso-conjugating mapping $\psi \colon D_1\to D_2$. Recall that the type of $D_1$ and $D_2$ is the same.

\begin{lemma} Let $\psi \colon D_1\to D_2$ be an iso-conjugating mapping. Then
for any $h\in\mathcal O(D_1)$ there is a $g\in\mathcal O(D_2)$ such that
the endomorphism
\begin{equation}
  F \colon D_1\owns z \mapsto z\cdot p_{\gamma_1}(h(z))\in D_1.    
\end{equation}
is conjugated to
\begin{equation}
    G \colon D_2\owns z \mapsto z\cdot p_{\gamma_2}(g(z)).
\end{equation}
In other words
\begin{equation}
    \psi(z\cdot p_{\gamma_1}(h(z)))=\psi(z)\cdot p_{\gamma_2}(g(\psi(z))), \; z\in D_1.
\end{equation}
\end{lemma}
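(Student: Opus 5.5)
The plan is to identify $G:=\psi\circ F\circ\psi^{-1}$ through the classification of endomorphisms in Theorem~\ref{theorem:endomorphisms}, and then to show that $G$ has degree $1$ with trivial monomial part and $a=(1,1)$.

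First, Lemma~\ref{lemma-equivalence-classes} applies pointwise. Since $F(z)=z\cdot p_{\gamma_1}(h(z))$ lies in $[z]_{\sim_{\gamma_1}}$, we get
\[
G(\psi(z))=\psi(F(z))\in[\psi(z)]_{\sim_{\gamma_2}}.
\]
So for every $w\in D_2$ we have $G(w)=w\cdot p_{\gamma_2}(\lambda(w))$ for some, a priori non-unique and non-holomorphic, choice of $\lambda(w)\in\CC$. Note that $p_{\gamma}$ is injective, because $\exp(\lambda)$ determines $\lambda$ modulo $2\pi i\ZZ$ and then $\exp(-\gamma\lambda)$ pins it down since $\gamma\notin\QQ$. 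Hence $\lambda(w)$ is in fact unique.

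Second, I would write $G$ in the normal form of Theorem~\ref{theorem:endomorphisms}:
\[
G(w)=(w_1^{k_2}w_2^{k_1},\,w_1^{l_2}w_2^{l_1})\cdot a\cdot p_{\gamma_2}(g_0(w))
\]
with degree $\alpha$. From $G(w)\sim_{\gamma_2}w$ we get
\[
|G_1(w)||G_2(w)|^{\gamma_2}=|w_1||w_2|^{\gamma_2}.
\]
The degree formula gives $|a_1||a_2|^{\gamma_2}\rho^{\alpha}=\rho$ for all admissible $\rho$, which forces $\alpha=1$ and $|a_1||a_2|^{\gamma_2}=1$. Then the map $w\mapsto G(w)/w$, defined componentwise, is holomorphic on $D_2$, takes values in $p_{\gamma_2}(\CC)$, and equals
\[
(w_1^{k_2-1}w_2^{k_1},\,w_1^{l_2}w_2^{l_1-1})\cdot a\cdot p_{\gamma_2}(g_0(w)).
\]

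Third, I must show that $k_2=1$, $l_1=1$ and $k_1=l_2=0$, and that $a\in p_{\gamma_2}(\CC)$. Write $u(w):=G(w)/w=p_{\gamma_2}(\lambda(w))$. Its second component $u_2=\exp(\lambda)$ and its first component $u_1=\exp(-\gamma_2\lambda)$ are linked by $u_1=c\,u_2^{-\gamma_2}$ for a branch, so $\lambda$ should be recovered as a holomorphic logarithm. I would restrict to the circles $\lambda\mapsto(1,\lambda)$ and $\lambda\mapsto(\lambda,1)$ as in the proof of Theorem~\ref{theorem:endomorphisms}, and compare winding numbers. The winding number of $u_2$ along a circle in the $w_1$-direction is $l_2$, and that of $u_1$ is $k_2-1$. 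But $u=p_{\gamma_2}(\lambda)$ along this loop means that $\lambda$ changes by $2\pi i\, l_2$ from the second component and by $-2\pi i(k_2-1)/\gamma_2$ from the first. Together with $\gamma_2\notin\QQ$, this should give $l_2=0$ and $k_2=1$. The analogous loop in $w_2$ gives $k_1=0$ and $l_1=1$.

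The main obstacle is this third step. It is the delicate point, because $\lambda$ is not yet known to be continuous, so the winding argument is not immediate. I would first try to replace it by a direct argument. Once we know $u(w)\in p_{\gamma_2}(\CC)$ for all $w$, the holomorphic map $u$ (restricted to the regions where this holds) takes values in the leaf $V(\gamma_2)$. Because the lift is unique, a lift along any path through the leaf is well defined and continuous, so we can write $u=p_{\gamma_2}\circ\lambda$ locally with $\lambda=\log u_2$ holomorphic. On a loop, $\log u_2$ and $-\gamma_2^{-1}\log u_1$ must then have equal monodromy, namely $2\pi i\ZZ$ versus $2\pi i\gamma_2^{-1}\ZZ$. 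Since $\gamma_2$ is irrational, both monodromies vanish, so $\lambda$ is single-valued. This yields the exponents above and gives $g:=\lambda\in\holo(D_2)$ globally. Finally, the identity $\psi(z\cdot p_{\gamma_1}(h(z)))=\psi(z)\cdot p_{\gamma_2}(g(\psi(z)))$ is just $G\circ\psi=\psi\circ F$ rewritten.
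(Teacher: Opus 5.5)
Your Steps 1 and 2 are the paper's proof. Lemma~\ref{lemma-equivalence-classes} gives $G(w)\sim_{\gamma_2}w$, and comparing $|w_1||w_2|^{\gamma_2}$ with the degree formula forces $\alpha=1$ and $|a_1||a_2|^{\gamma_2}=1$.

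What you call the main obstacle is not one. In the normal form of Theorem~\ref{theorem:endomorphisms}, the exponents are tied to the degree by $\alpha\gamma_2=k_1+\gamma_2 l_1$ and $\alpha=k_2+\gamma_2 l_2$. So $\alpha=1$ together with $\gamma_2\notin\QQ$ gives $k_1=l_2=0$ and $k_2=l_1=1$ at once, i.e.\ the monomial part is the identity. Then $a\cdot p_{\gamma_2}(g_0(w))=G(w)/w\in p_{\gamma_2}(\CC)$. Since $p_{\gamma_2}(\CC)$ is a subgroup of $\CC_\ast^2$, this gives $a=p_{\gamma_2}(\lambda_0)$, and you may take $g:=g_0+\lambda_0$. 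That is the paper's whole argument; no winding numbers or lifting are needed.

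The lifting detour, as written, contains a false inference. Uniqueness of $\lambda(w)$ does not make it continuous. The map $p_{\gamma_2}$ is an injective immersion whose image is dense in $V_1(\gamma_2)$, so its inverse is not continuous for the subspace topology. For example, $p_{\gamma_2}(2\pi i m)=(e^{-2\pi i\gamma_2 m},1)$ accumulates at $(1,1)=p_{\gamma_2}(0)$ along a subsequence while $2\pi i m\not\to 0$.

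The repair is short:
\begin{itemize}
\item On a ball $U\subset D_2$, choose a holomorphic branch $L$ of $\log u_2$; this is possible since $u_2$ has no zeros.
\item Then $\lambda-L=2\pi i\,m$ with $m\colon U\to\ZZ$.
\item The map $w\mapsto e^{-2\pi i\gamma_2 m(w)}=u_1(w)e^{\gamma_2 L(w)}$ is continuous with values in a countable set, hence constant on the connected set $U$.
\item By irrationality of $\gamma_2$, $m$ is constant, so $\lambda$ is holomorphic on $U$.
\end{itemize}
The monodromy discussion is then superfluous, because $\lambda$ is a globally defined function from the start.

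Once repaired, your lifting argument is a genuinely different and more self-contained proof. From $G(w)\sim_{\gamma_2}w$ alone it yields $G(w)=w\cdot p_{\gamma_2}(g(w))$ with $g\in\holo(D_2)$. It never invokes the classification in Theorem~\ref{theorem:endomorphisms} or the degree computation. The paper's route instead leans on that classification, and after it needs only the arithmetic above.
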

\begin{proof}
    Let the conjugated endomorphism to $F$ be
\begin{equation}
    G \colon D_2\owns z \mapsto a\cdot \Phi(z)\cdot p_{\gamma_2}(g(z))\in D_2,
\end{equation}
where $\Phi(z):=(z_1^{k_2}z_2^{k_1},z_1^{l_2}z_2^{l_1})$ with $k_j,l_j\in\mathbb Z$, $\alpha\gamma=k_1+\gamma_2 l_1$, $\alpha=k_2+\gamma_2 l_2$ and $a$ is chosen appropriately. We claim that the degree $\alpha=1$. Actually, by Lemma~\ref{lemma-equivalence-classes} we have
\begin{equation}
  \psi(z)\sim_{\gamma_2}\psi(z\cdot p_{\gamma_1}(h(z))=a\cdot \Phi(\psi(z))\cdot p_{\gamma_2}(g(\psi(z))),\; z\in D_1.  
\end{equation}

Taking the expressions $|w_1||w_2|^{\gamma_2}$ in the above equivalence we get
\begin{equation}
    |a_1||a_2|^{\gamma_2}(|\psi_(z)||\psi_2(z)|^{\gamma_2})^{\alpha-1}=1,\; z\in D_1,
\end{equation}
which is only possible if $\alpha=1$. Consequently, $\Phi(z)=z$ and $a\in[(1,1)]_{\sim_{\gamma_2}}$ or $a=p_{\gamma_2}(\lambda_0)$ for some $\lambda_0\in\mathbb C$. Taking $g+\lambda_0$ instead of $g$ we complete the proof.
\end{proof}

A special consequence of the above lemma is that an automorphism conjugated to $z\cdot p_{\gamma_1}(\lambda)$ has degree one and gives the relation
\begin{equation}\label{equation:iso-transformation}
    \psi(z\cdot p_{\gamma_1}(\lambda))=\psi(z)\cdot p_{\gamma_2}(\widehat{B}(\lambda)),
\end{equation}
where $\widehat{B}:\mathbb C\to\mathbb C$ is bijective. Note also that $\widehat{B}(0)=0$. We claim also that $\hat B(\lambda+\mu)=\widehat{B}(\lambda)+\widehat{B}(\mu)$, $\lambda,\mu\in\mathbb C$. Actually, to see the additivity note that by (\ref{equation:iso-transformation}) we get the following equalities
\begin{multline}
\psi(z)\cdot p_{\gamma_2}(\widehat{B}(\mu+\lambda))=\psi(z\cdot p_{\gamma_1}(\mu+\lambda))=   
\psi(z\cdot p_{\gamma_1}(\mu)\cdot p_{\gamma_1}(\lambda))=\\ \psi(z\cdot p_{\gamma_1}(\mu))\cdot p_{\gamma_2}(\widehat{B}(\lambda))=\psi(z)\cdot p_{\gamma_2}(\widehat{B}(\mu))\cdot p_{\gamma_2}(\widehat{B}(\lambda))
\end{multline}
for all $z\in D_1$, $\mu,\lambda\in\mathbb C$.

Altogether for any $h\in\mathcal O(D_1)$ we find a $g\in\mathcal O(D_2)$ such that 
\begin{equation}
    \widehat{B}(h(z))=g(\psi(z)),\; z\in D_1.
\end{equation}

\subsection{Proof of Theorem~\ref{theorem-reinhardt}}
Having all the earlier considerations in mind, we move now to the proof of Theorem~\ref{theorem-reinhardt}.

\begin{proof} We proceed simultaneously with both cases of types of domains.

Recall that the bijective $\widehat{B} \colon \mathbb C\to\mathbb C$ satisfies $\hat{B}(0)=0$, $\hat{B}(\mu+\lambda)=\hat{B}(\mu)+\hat{B}(\lambda)$, $\lambda,\mu\in\mathbb C$. In particular, $\hat{B}(-\lambda)=-\hat{B}(\lambda)$, $\lambda\in\mathbb C$.

We also get the following correspondence induced by $\psi \colon D_1\to D_2$ 
%($D_j$ below is either $D_{\gamma_j}^*$ or $D_{\gamma_j,r_j}$)
\begin{equation}
    \Phi \colon \mathcal O(D_2)\owns g \mapsto \Phi(g):=h\in\mathcal O(D_1)
\end{equation}
that satisfies the relation $\widehat{B}(h(z))=g(\psi(z))$, $z\in D_1$.

This leads to the presentation of $\Phi$ by the formula
\begin{equation}
    \Phi(g)(z)=\hat{B}^{-1}(g(\psi(z))),\; z\in D_1.
\end{equation}
We claim that $\Phi$ is a ring isomorphism. In fact, the additivity follows directly from the additivity of $\hat{B}^{-1}$. To show the multiplicativity, it is sufficient to show that $\Phi(g^2)=\Phi(g)^2$, $g\in\mathcal O(D)$. Denote $h:=\Phi(g)$.

For $z\in D_1$, $\lambda\in\mathbb C$ the following equivalence holds $g(\psi(z))=\hat{B}(\lambda)$ iff $h(z)=\lambda$. In particular, $g(\psi(z))=\widehat{B}(h(z))$, $z\in D$.

Then $g^2(\psi(z))=\hat{B}(\lambda)^2$ iff $g(\psi(z))=\hat{B}(\lambda)$ or $g(\psi(z))=-\hat{B}(\lambda)=\hat{B}(-\lambda)$. The latter is equivalent to: $h(z)=\lambda$ or $h(z)=-\lambda$ which is equivalent to $h^2(z)=\lambda^2$ which means $\Phi(g^2)=h^2$.

Bers' theorem (in the version from \cite{MR0185143}) lets us conclude that there is a(n) (anti-)biholomorphism $\varphi \colon D_1\to D_2$ such that
\begin{equation}\label{equation:conjugation}
    \Phi(g)=g\circ\varphi,\; g\in\mathcal O(D_2)
\end{equation}

Keeping in mind that $D_j$'s are domains of strip type or of     non-complete half plane type by results of \cite{05208635} one may conclude that $D_1=D_2=:D$. And the (anti)-biholomorphism is very regular (in particular, it extends to $\mathbb C_*^2$) -- see Theorem~1 in \cite{05208635}.

%As $\hat B(0)=0$ and $B(1,1)=(1,1)$ we get that $\varphi(1,1)=(1,1)$. So, composing with the automorphism $z^{-1}$ if necessary, the automorphism $\varphi$ is either the identity or the conjugation $\varphi(z)=\overline{z}:=(\overline{z}_1,\overline{z}_2)$. It is sufficient to show that $\psi$ is either the identity or the conjugation, too.

Then
\begin{equation}
    \widehat{B}(g(\varphi(z)))=g(\psi(z)),\; z\in D,\; g\in\mathcal O(D).
\end{equation}
Applying the above to $g=\pi_j$, $j=1,2$ ($\pi_j(z):=z_j$) we get
$\psi(z)=(\hat{B}(\varphi_1(z),\hat{B}(\varphi_2(z)))$, $z\in D$. 

And then
\begin{equation}
    \widehat{B}(g(\varphi(z)))=g(\widehat{B}(\varphi_1(z)),\widehat{B}(\varphi_2(z))),\;g\in\mathcal O(D),\; z\in D.
\end{equation}

But taking the function $g\in\mathcal O(\mathbb C_*)$ as the one defined on $D$ (independent of variable $z_2$) we get
\begin{equation}
    \hat{B}(g(\varphi_1(z)))=g(\hat{B}(\varphi_1(z))),\; g\in\mathcal O(\mathbb C_*),\; z\in D
\end{equation}
or
\begin{equation}
    \hat{B}(g(w_1))=g(\hat{B}(w_1)),\; g\in\mathcal O(\mathbb C_*),\; w_1\in\mathbb C_*,
\end{equation}
which shows by results of Section~\ref{section-punctured-plane} that $\hat{B}$ is (anti)-holomorphic and this finishes the proof.

\end{proof}

\section{A non-pseudoconvex counterexample}
\begin{theorem}
\label{thm:counterexample}
There exist uncountably many holomorphically inequivalent non-pseudoconvex Reinhardt domains $D\subset\mathbb C^2$ such that $\operatorname{End}(D)$ consists only of rotations, i.e.\ mappings $D\owns z\to(\omega_1z_1,\omega_2z_2)\in D$ for $|\omega_j|=1$, and of constant functions.
\end{theorem}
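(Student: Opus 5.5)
We will build the domains as bounded, non-pseudoconvex Reinhardt domains with a rigid shape. The rigidity we aim for has two parts: the only automorphisms are rotations, and every non-constant endomorphism is forced to be an automorphism. A natural candidate family, indexed by a continuous parameter $t$ from an interval, is the following. Take a bounded complete Reinhardt domain $D_t\subset\CC^2$ with $0\in D_t$, whose logarithmic image is a non-convex region with a ``notch'' whose position depends on $t$. For example, take a polydisc with a Reinhardt piece removed, so that $D_t$ is not pseudoconvex and its envelope of holomorphy $\widehat{D_t}$ is a polydisc or a convex complete Reinhardt domain. Since the envelope is the same bidisc for all $t$, the invariant distinguishing the domains has to be the shape of $D_t$ inside the envelope.

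The key steps, in order, are as follows.

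\emph{Step 1 (extension).} Every endomorphism $F$ of $D_t$ extends holomorphically to $\widehat{D_t}$, because each coordinate of $F$ is a holomorphic function on $D_t$. The extension maps $\widehat{D_t}$ into the envelope's closure (bounded functions stay bounded), so by the maximum principle $F(\widehat{D_t})\subset\widehat{D_t}$.

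\emph{Step 2 (fixed point at $0$).} Using the Cartan-type results on bounded complete Reinhardt domains and the Kobayashi distance on $\widehat{D_t}$, we show that $F$ has to preserve $0$ or be constant. Here we use that $D_t$ omits a region in a way that makes $0$ the unique point with a specific property, for instance the unique point whose orbit under the rotation group $\mathbb{T}^2$ is a point. Points of $D_t$ with $z_1z_2=0$ have smaller orbits, so we will instead design the notch so that the axes are metrically distinguished. An alternative is to choose $D_t$ with $0\notin D_t$, lying in $\CC_*^2$, with bounded logarithmic image consisting of two disjoint-ish convex blobs connected by a thin corridor, so that the domain is non-pseudoconvex.

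\emph{Step 3 (monomial form).} We then show that the endomorphism, which extends to the envelope $\widehat{D_t}$ (a bounded pseudoconvex Reinhardt domain in $\CC_*^2$), must have logarithmic action given by an integer matrix plus a translation, up to the $p_\gamma$-type fibers. This follows from Theorem~\ref{theorem:endomorphisms}-style arguments applied to the envelope. Since the envelope is bounded and hyperbolic, we would use the description of proper maps or endomorphisms of bounded Reinhardt domains via Laurent expansions, which reduces to the statement that the logarithmic image is mapped affinely. Finally, the non-convex shape of $\log D_t$ is chosen generic (no affine integer symmetries, not mapping into itself under any nontrivial integer affine map except the identity and constants). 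This leaves only rotations and constants.

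\emph{Step 4 (uncountably many classes).} Biholomorphisms between Reinhardt domains of this type are elementary algebraic maps. Hence different generic parameters $t$ give logarithmic images that are not related by $GL(2,\ZZ)$-affine maps, and we obtain uncountably many inequivalent domains.

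The main obstacle is Step 3: controlling \emph{all} endomorphisms, not just proper or biholomorphic ones. Non-proper holomorphic maps with a non-affine logarithmic action must be ruled out. My first attempt would be to write each coordinate as a Laurent series on the envelope and use the Reinhardt invariance to bound $\log|F_j|$ by a convex-affine function on $\log\widehat{D_t}$. The argument would then show that a non-constant map cannot respect the notch unless it is monomial.
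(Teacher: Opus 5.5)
\textbf{There is a genuine gap, and it is fatal: your construction uses bounded domains, and no bounded domain can have the required property.}

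Let $D\subset\CC^2$ be any bounded domain. Pick $p\in D$ and $\delta>0$ with $B(p,\delta)\subset D$, and set $R:=\sup_{z\in D}|z|$. Then $F(z):=p+\frac{\delta}{2R}z$ maps $D$ into $B(p,\delta/2)\subsetneq D$. So $F$ is a non-constant endomorphism of $D$. It is not a rotation, since rotations are automorphisms of $D$ and $F$ is not surjective.

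For the complete domains of your first candidate there are even more such maps, for instance $(\lambda_1z_1,\lambda_2z_2)$ with $|\lambda_j|<1$, or $(z_1,0)$.

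This breaks each step:
\begin{itemize}
\item Step 2 cannot force $F(0)=0$ or constancy. Kobayashi hyperbolicity of the envelope only says that endomorphisms are distance-decreasing, and contractions are distance-decreasing.
\item Step 3's assertion is false. It claims that endomorphisms of a bounded Reinhardt domain act on the logarithmic image by an integer matrix plus a translation, but the map $F$ above is not monomial.
\item No choice of notch or generic shape of $\log D_t$ can help, because the obstruction uses only boundedness.
\end{itemize}

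The missing idea is a Liouville-type rigidity. Coordinates of endomorphisms extend to the envelope of holomorphy, so you need an envelope with no non-constant bounded holomorphic functions. That envelope is necessarily unbounded and non-hyperbolic.

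The paper's route is as follows.
\begin{itemize}
\item It takes $D_\gamma=\{|z_1||z_2|^\gamma<1\}$ with $\gamma>1$ non-algebraic. Its endomorphisms are known explicitly from Theorem~\ref{theorem:endomorphisms}: $(h,0)$, $(0,h)$, and $\beta\cdot z^n\cdot p_\gamma\circ h$.
\item It removes from $D_\gamma$ the parts of the axes outside the unit disc, together with the slit sets $I_1,I_2$. Endomorphisms of the resulting non-pseudoconvex $D$ extend to $D_\gamma$.
\item Maps of the form $(h,0)$ or $(0,h)$ have $h$ bounded, hence constant. Bounded holomorphic functions on $D_\gamma$ are constant because they are constant on the entire curves $a\cdot p_\gamma(\CC)$, which are dense in $V_\rho(\gamma)$.
\item For maps $\beta\cdot z^n\cdot p_\gamma(h)$, Picard's theorem along these entire curves, combined with the removed tori, forces $h$ to be constant.
\item The geometry of $\log D$ then forces $n\in\{0,1\}$ with $|\beta_1|=|\beta_2|=1$.
\item Uncountably many inequivalent examples come from varying $\gamma$, since the envelopes $D_\gamma$ are pairwise biholomorphically inequivalent.
\end{itemize}
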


\begin{proof} We start by recalling the endomorphisms of the domain $D_{\gamma}$, where $\gamma>1$ is not algebraic. It consists of the functions $(0,h)$, $(h,0)$ and functions of the form 
\begin{equation}\beta\cdot z^n\cdot p_{\gamma}\circ h,
\end{equation}
where $h\in\mathcal O(D_{\gamma})$, $n=0,1,2,\ldots$, $0<|\beta_1||\beta_2|^{\gamma}\leq 1$, where the last inequality is strict when $n=0$.

Now we define the domain $D$ as follows.
\begin{equation}
    D:=D_{\gamma}\setminus\left((\mathbb C\setminus\mathbb D)\times\{0\}\cup \{0\}\times(\mathbb C\setminus\mathbb D)\cup I_1\cup I_2\right),
    \end{equation}
    where $I_1:=\partial\triangle(0,2)\times \overline{\triangle}(0,1/4^{1/\gamma})$, $I_2:=(\mathbb C\setminus\triangle(0,1/2^{\gamma+1}))\times\partial\triangle(0,2)$.

The set $D$ is the union of $D_{\gamma}^*$ and of two unit discs lying in the axes and with two slits deleted. 

In our case, the essential property is that the logarithmic images of all surfaces $V_r(\gamma)\cap D$ are equal to $V_r(\gamma)$ with one torus removed (for all $r\in(0,1)$, $r\neq 1/2$) or two tori removed (for $r=1/2$).

Note that any holomorphic function on $D$ extends to a holomorphic function on $D_{\gamma}$, so we shall work further with the functions defined on the latter domains. Moreover, one may verify that any endomorphism of $D$ extends to an endomorphism of $D_{\gamma}$. Note also that $D$ is dense in $D_{\gamma}$. Our purpose is to show that if the endomorphism of $D_{\gamma}$ maps $D$ to $D$ then it must be of the form as claimed in the theorem.

We consider first endomorphism of the form $(h,0)$ (and analoguously, $(0,h)$). Note that on $D$ the values of $h$ are bounded by one, so $h$ is bounded by one on $D_{\gamma}$. But any bounded holomorphic function on $D_{\gamma}$ is constant.

Consider the other situation. Note that these endomorphisms (being also endomorphisms of $D_{\gamma}^*$) have the following property: They map the real analytic hypersurface $V_r(\gamma)$ with $0<r<1$ to the surface of the same type $V_s(\gamma)$. 

We prove the following fact that is crucial in the proof of the theorem.

\bigskip

{\bf Claim.} The function $h$ appearing in the formula of an endomorphism must be constant. 

\begin{proof}[Proof of Claim] To show the Claim, it is sufficient to show that for some (equivalently, any) $\beta\in D_{\gamma}^*$ the function
\begin{equation}
 \Phi \colon \mathbb C\owns\lambda \mapsto h(\beta\cdot p_{\gamma}(\lambda))\in\mathbb C
\end{equation}
is constant. To see that this is sufficient, assume $\Phi$ is constant. Then $h$ is constant on $\beta\cdot p_{\gamma}(\mathbb C)$ which, by the irrationality of $\gamma$, is dense in
$V_{|\beta_1||\beta_2|^{\gamma}}(\gamma)$. Consequently, the function $h$ is constant on $V_{|\beta_1||\beta_2|^{\gamma}}(\gamma)$. Then the identity principle shows that $h$ is constant.

Suppose that there is an endomorphism $\Phi(z)=\beta\cdot z^n\cdot p_{\gamma}(h(z))$, $z\in D$ with $h$ being not constant. Choose $r_0\in(0,1)$, $r_0\neq 1/2$. Denote the torus from $V_{r_0}(\gamma)$ not belonging to $D$ by $T$. We claim that $\Phi$ maps $T$ to a torus. To prove that, it is sufficient to show that the mapping $\Phi_0$, where $\Phi_0(z):=\beta\cdot p_{\gamma}(h(z))$, would map $T$ to a torus. If this were the case then the mapping 
\begin{equation}
    D_{\gamma}\owns z \mapsto \log |\beta_2 \exp(h(z)|=\log|\beta_2|+\operatorname{Re}(h(z))
\end{equation}
would be a pluriharmonic mapping that would be constant on the torus $T$. The identity principle would imply that $h$ is constant.

The form of $\Phi$ and the geometry of $D$ let us conclude that the preimages of tori (one or two) from a surface $V_s(\gamma)$ not belonging to the domain $D$ when intersected with some (any) surface $V_r(\gamma)$ must be contained in tori (one or two) from a surface.

Note that by Picard's theorem, for any $a\in T$ the set $\Phi_0(a\cdot p_{\gamma}(\mathbb C))$ touches both tori $T_1$ and $T_2$ that are deleted from the domain $D$. Moreover, to establish the fact that $\Phi_0(T)$ is a torus, it is sufficient to show that if $\Phi_0(a)\in T_1$ then for all $\lambda\in\mathbb C$ with $\operatorname{Re}(\lambda)=0$ we get that $\Phi_0(a\cdot p_{\gamma}(\lambda))\in T_1$. Suppose the opposite. Then, as the mapping
\begin{equation}
    \lambda\mapsto h(a\cdot p_{\gamma}(\lambda))
\end{equation}
is not constant, by the openness of holomorphic functions, we would get the existence of $\lambda$ arbitrarily close to zero with $\operatorname{Re}(\lambda)\neq 0$ and such that $\operatorname{Re}(h(a\cdot p_{\gamma}(\lambda)))=0$, which implies that for $z_0:=a\cdot p_{\gamma}(\lambda)\in V_{r_0}(\gamma)\setminus T$ we get that $\Phi_0(z_0)\in T_1$  -- a contradiction. And this finishes the proof of Claim. 
\end{proof}

%\bigskip

By Claim we know that $\Phi_0(z)=(\beta_1z_1^n,\beta_2z_2^n)$. 

 From the geometry of $D$ (the slits!) we easily get that $n=0$ or $n=1$ (and then $|\beta_1|=|\beta_2|=1$). To see this, one may look at the geometry of $\log D$
 %\begin{equation}
  %   \log D:=\{x\in\mathbb R^n:(e^{x_1},\ldots,e^{x_n})\in D\}
 %\end
 that has to be preserved by the affine mapping $\log D\owns x \mapsto (\log|\beta_1|+n x_1,\log|\beta_2|+nx_2)\in\log D$.
\end{proof}

\section*{Funding}
Rafael Andrist was supported by the European Union (ERC Advanced
grant HPDR, 101053085 to Franc Forstneri\v{c}). 

W\l odzimierz Zwonek was supported by the NCN (National Science Centre, Poland) grant no. 2023/51/B/ST1/01312.

\section*{Conflict of Interest}
The authors have no relevant competing interest to disclose.

\end{document}